\newtheorem{lemma}{Lemma}[section]
\newtheorem{corollary}[lemma]{Corollary}
\newtheorem{theorem}[lemma]{Theorem}
\newtheorem{proposition}[lemma]{Proposition}
\theoremstyle{definition}
\newtheorem{remark}[lemma]{Remark}
\newtheorem{definition}[lemma]{Definition}
\newtheorem{example}[lemma]{Example}
\begin{document}

\title{Singular equivalence of finite dimensional algebras with radical square zero}\footnotetext{This research was in part supported by a grant from IPM (No. 93170419).}

\author{A. R. Nasr-Isfahani}
\address{Department of Mathematics, University of Isfahan, P.O. Box: 81746-73441, Isfahan, Iran\\
  and School of Mathematics, Institute for Research in Fundamental Sciences (IPM), P.O. Box: 19395-5746, Tehran, Iran}
\email{nasr$_{-}$a@sci.ui.ac.ir / nasr@ipm.ir}

\subjclass[2010]{{16E35}, {16E45}, {16W50}, {16D90}}

\keywords{Singularity category, Graded Morita equivalence, Path algebra, Leavitt path algebra}

\begin{abstract} We prove that the Crisp and Gow's quiver operation on a finite quiver $Q$ produces a new quiver $Q'$ with fewer vertices, such that the finite dimensional algebras $kQ/J^2$ and $kQ'/J^2$ are singularly equivalent. This operation is a general quiver operation which includes as specific examples some operations which arise naturally in symbolic dynamics (e.g., (elementary) strong shift equivalent, (in-out) splitting, source elimination, etc.).
\end{abstract}

\maketitle


\section{Introduction}

Throughout this paper $Q$ will denote a quiver, and $k$ will denote an arbitrary
field. The path algebra of $Q$ with coefficients in $k$ denote by $kQ$, and its two-sided ideal generated by arrows denote by $J$. Let $kQ/J^2$ be the factor algebra modulo the ideal $J^2$. When $Q$ is a finite quiver, $kQ/J^2$ is a finite-dimensional algebra with radical square zero.

The singularity category of a finite dimensional algebra $A$, denoted $\mathbf{D}_{sg}(A)$, is the Verdier quotient category of the bonded derived category of finitely generated $A$-modules with respect to the full subcategory consisting of perfect complexes \cite{BU, O}. The singularity category $\mathbf{D}_{sg}(A)$ of $A$ is trivial if and only if $A$ has finite global dimension. Thus the singularity category of $A$ measures the homological singularity of $A$. Note that the singularity category $\mathbf{D}_{sg}(A)$ of $A$ is triangle equivalent to the full subcategory consisting of compact objects of stable derived category $\mathbf{K}_{ac}(A$-$Inj)$ of $A$ \cite{K}.

The Leavitt path algebra of $Q$ with coefficients in $k$, denoted
$L_k(Q)$, was introduced by P. Ara, M. A. Moreno
and E. Pardo in \cite{AMP} and by G. Abrams and G. Aranda Pino in
\cite{AA1}. Leavitt path algebras are generalization of the algebras
constructed by Leavitt in \cite{Leavitt} to produce rings without the Invariant Basis Number property (i.e., $R_R^m\cong
R_R^n$ as left $R$-module with $m\neq n$). These algebras can be
regarded as the algebraic counterpart of the analytic graph
$C^*$-algebras. Note that the Leavitt path algebra construction arises naturally in a variety of
different contexts as diverse as analysis, noncommutative algebra,
noncommutative algebraic geometry, symbolic dynamic, number theory
and representation theory.

Recently, X. W. Chen and D. Yang in \cite{CY} studied the derived category of differential graded $L_k(Q)^{op}$-modules and the homotopy category of acyclic complexes of injective $kQ/J^2$-modules. X. W. Chen and D. Yang proved that: "Let $Q$ be a finite quiver without sinks, then  there are triangle equivalences $\mathbf{K}_{ac}(kQ/J^2-Inj)\simeq \mathbf{D}(L_k(Q)^{op})$ and $\mathbf{D}_{sg}(kQ/J^2)\simeq \mathbf{perf}(L_k(Q)^{op})$ (which obtains from the first equivalence restricting to the compact objects)". They also showed that: "Let $Q$ and $Q'$ be finite quivers without sinks. Then the algebras $kQ/J^2$ and $kQ'/J^2$ are singularly equivalent if and only if there is a triangle equivalence $\mathbf{K}_{ac}(kQ/J^2$-$Inj)\simeq\mathbf{K}_{ac}(kQ'/J^2$-$Inj)$ if and only if there is a triangle equivalence $\mathbf{K}_{ac}(kQ/J^2$-$Proj)\simeq\mathbf{K}_{ac}(kQ'/J^2$-$Proj)$ if and only if the Leavitt path algebras $L_k(Q)$ and $L_k(Q')$ are derived equivalent if and only if the Leavitt path algebras $L_k(Q)$ and $L_k(Q')$ are graded Morita equivalent".

The aim of this paper is to show that the Crisp and Gow's operation on a finite quiver $Q$ produces a new quiver $Q'$, such that the finite dimensional algebras $kQ/J^2$ and $kQ'/J^2$ are singularly equivalent. In the proof we use the X. W. Chen and D. Yang's Theorem \cite{CY} and show that the Leavitt path algebras $L_k(Q)$ and $L_k(Q')$ are graded Morita equivalent. Several authors in \cite{AALP}, \cite{AA3}, \cite{ALPS} and \cite{H1} analyzed certain operations on quivers which preserve the (graded) Morita equivalence class of the associated Leavitt path algebras. This results are algebraic analogue of the analytic results that proved in \cite{A}, \cite{B}, \cite{BP}, \cite{CG}, \cite{D} and \cite{DT}. Note that in the above papers the authors explored the connections between symbolic dynamics and Leavitt path algebras (graph $C^*$-algebras).

The graded Morita equivalence property of Leavitt path algebras not only important in the contexts of Leavitt path algebras and singularity of path algebras, but also has applications in the noncommutative algebraic geometry. S. P. Smith in \cite{PS1} and R. Hazrat in \cite{H1} studied the connection between Leavitt path algebras and noncommutative algebraic geometry. Combining the results of Smith and Hazrat we can see that for finite quivers $Q$ and $Q'$ without sinks, if the Leavitt path algebras $L_k(Q)$ and $L_k(Q')$ are graded Morita equivalent then the categories $QGr(kQ)$ (resp. $qgr(kQ)$) and $QGr(kQ')$ (resp. $qgr(kQ')$) are equivalent, where $QGr(kQ)$ (resp. $qgr(kQ)$) is the category of quasi-coherent (resp. coherent) sheaves on noncommutative projective scheme $Proj_{nc}(kQ)$.

The paper is organized as follows. In section 2 we recall some
definitions and basic properties of Leavitt path algebras, differential graded algebras and noncommutative algebraic geometry that we need in the paper. In
section 3 we give our main result, Theorem \ref{theresult}, which
provide sufficient conditions for quivers $Q$ and $Q'$ such that the Leavitt path algebras $L_k(Q)$ and $L_k(Q')$
are graded Morita equivalent. Also in this section we give some corollaries and applications of the main Theorem. Note that the results of this section not only clarify the already known results, but also give a positive answer of the Hazrat's question in \cite[Remark 31.]{H1}, provide new examples of interesting quiver moves which preserve the graded Morita equivalence class of the associated Leavitt path algebras, and extend some known results. Finally, in the last section we give the proof of the main Theorem.


\section{Definitions and Preliminaries}

\subsection{Leavitt path algebras}
A {\it quiver} $Q=(Q^{0},Q^{1},r,s)$ consists of two
sets $Q^{0}$ and $ Q^{1}$ together with maps $r,s:Q^{1}\rightarrow
Q^{0}$. The elements of $ Q^{0}$ are called \textit{vertices} and
the elements of $Q^{1}$ are called \textit{edges}. A quiver $Q$ is said to be finite if $Q^0$ and $Q^1$ are finite sets. A vertex which emits no
edges is called \textit{\ sink}; a vertex which emits
infinitely-many edges is called \textit{infinite emitter}.

A finite path $\mu $ in a
quiver $Q$ is a finite sequence of edges $\mu =\alpha_{1}\dots \alpha_{n}$ such that $
r(\alpha_{i})=s(\alpha_{i+1})$ for $i=1,\dots ,n-1$. In this case, $n=l(\mu)$ is called the length
of $\mu $. Let $v\in Q^0$, we define $s(v) = r(v) = v$, and $l(v)=0$. For any $n\in {\mathbb N}$ the set of paths of length $n$ is denoted by $Q^n$.
We denote the set of all finite paths in $Q$ by $\text{Path}(Q)$ i.e., $\text{Path}(Q)=\bigcup_{n\in {\mathbb N}} Q^n$. Also we denote
by $\mu ^{0}$ the set of the vertices of the path $\mu=\alpha_{1}\dots \alpha_{n}$, that is, the set $
\{s(\alpha_{1}),r(\alpha_{1}),\dots ,r(\alpha_{n})\}$. A path $\mu=\alpha_{1}\dots \alpha_{n}$ is \textit{closed} if $r(\alpha_{n})=s(\alpha_{1})$,
in this case $\mu $ is said to be {\it based at the vertex} $s(\alpha_{1})$. If $\mu=\alpha_{1}\dots \alpha_{n}$ is a closed path based at $s(\alpha_{1})$ and $s(\alpha_{i})\neq s(\alpha_{j})$ for every $i\neq j$, then $\mu $ is called a \textit{cycle}.

 A sequence $\{\alpha_i\}_{i\in {\mathbb N}}$ of edges in $Q$ is called an infinite path in case $r(\alpha_i) = s(\alpha_{i+1})$ for all $i$ and we denote the set of all infinite paths in $Q$ by $Q^\infty$.
For each $\alpha\in Q^{1}$, we call $\alpha^{\ast }$ a {\it ghost edge}. We let $r(\alpha^{\ast}) $ denote $s(\alpha)$, and we let $s(\alpha^{\ast })$ denote $r(\alpha)$.

\begin{definition} {\rm Let $Q$ be an arbitrary quiver and $k$ an arbitrary field. The \textit{Leavitt path algebra} $L_{k}(Q)$ is defined to be the $k$-algebra generated by a set $\{v:v\in Q^{0}\}$ of pairwise orthogonal idempotents together with a set of variables $\{\alpha,\alpha^{\ast }:\alpha\in Q^{1}\}$ which satisfy the following
conditions:

(1) $s(\alpha)\alpha=\alpha=\alpha r(\alpha)$ for each $\alpha\in Q^{1}$.

(2) $r(\alpha)\alpha^{\ast }=\alpha^{\ast }=\alpha^{\ast }s(\alpha)$\ for each $\alpha\in Q^{1}$.

(3) (The ``CK-1 relations") For each $\alpha,\beta\in Q^{1}$, $\alpha^{\ast}\alpha=r(\alpha)$ and $
\alpha^{\ast}\beta=0$ if $\alpha\neq \beta$.

(4) (The ``CK-2 relations") For each $v\in Q^{0}$ with $0 < |s^{-1}(v)| <\infty$,
\begin{equation*}
v=\sum_{\{\gamma\in Q^{1},\ s(\gamma)=v\}}\gamma\gamma^{\ast}.
\end{equation*}}
\end{definition}

The relations (CK1) and (CK2) are called the "Cuntz-Krieger
relations" after the relations introduced by Cuntz and Krieger for
their eponymous $C^*$-algebras. Another definition for $L_k(Q)$ can be given using the extended quiver $\widehat{Q}$. This quiver has the same set of vertices $Q^0$ and also has the same set of edges $Q^1$ together with the so-called ghost edges $\alpha^*$ for each $\alpha\in Q^1$, whose directions are opposite those of the corresponding $\alpha\in Q^1$. Thus, $L_k(Q)$ can be defined as the usual path algebra $k\widehat{Q}$ subject to the Cuntz-Krieger relations.
The set of elements of a $k$-algebra $A$ satisfying these same relations is called a $Q$-family. Let $A$ be a $k$-algebra with a $Q$-family, then by the
Universal Homomorphism Property of $L_k(Q)$, there is a unique $k$-algebra homomorphism from $L_k(Q)$ to $A$ mapping the
generators of $L_k(Q)$ to their appropriate counterparts in $A$. We will refer to this property as the
Universal Homomorphism Property of $L_k(Q)$.

\smallskip
Recall that the classical Leavitt algebra, $L_k(1,n)$, is the free associative $k$-algebra withe generators $x_i, y_i, 1\leq i\leq n$ and relations $x_iy_j=\delta_{ij}$, for each $i, j$ and $\sum_{i=1}^{n}y_ix_i=1$. Many well-known algebras can be realized as the Leavitt path algebra
of a quiver. The classical Leavitt algebras $L_k(1,n)$ for $n\geq 2$ can be obtained by the rose with $n$ petals. The ring of Laurent polynomials $k[x,x^{-1}]$ is the Leavitt path algebra of the quiver given by a single loop. Matrix algebras ${\mathbb M}_n(k)$ can be realized by the $A_n$ (i.e. line quiver with $n$ vertices and $n-1$ edges)  (for more details see \cite{AA1}, \cite{AA2}, and \cite{S}).

A singular vertex is a vertex that is either a sink or an infinite
emitter, and we denote the set of singular vertices by
$Q^0_{\text{sing}}$. We refer to the elements of $Q^0_{\text{reg}}:= Q^0 \setminus
Q^0_{\text{sing}}$ as regular vertices. Note that a vertex $v\in Q^0$ is a regular vertex
if and only if $0 < |s^{-1}(v)| <\infty$.

We define a relation $\geq $ on $Q^{0}$ by setting $v\geq w$ if there exists
a path $\mu$ in $Q$ from $v$ to $w$, that is, $v=s(\mu)$ and $w=r(\mu)$. A subset $H$ of $Q^{0}$ is called \textit{hereditary} if for each $v\in H$, $v\geq w$ implies that $w\in H$. A subset $H \subseteq Q^0$ is called \textit{saturated} if for any regular vertex $v$, $r(s^{-1}(v))\subseteq H$ implies that $v\in H$. The set
$T(v) = \{w\in E^0 | v \geq w\}$ is called the tree of $v$, and it is the smallest hereditary subset of $Q^0$
containing $v$. The hereditary saturated closure of a set $X$ is defined as the smallest hereditary and saturated subset of $Q^0$ containing $X$. It is shown in \cite{AMP} that the hereditary saturated closure of a set $X$ is $\overline{X} =\bigcup_{n=0}^{\infty}\Lambda_n(X)$, where
\begin{enumerate}
\item[{\rm (a)}] $\Lambda_0(X)=T(X)$,
\item[{\rm (b)}] $\Lambda_n(X)=\{y\in Q^0 | s^{-1}(y)\neq\emptyset, r(s^{-1}(y))\subseteq \Lambda_{n-1}(X)\}\cup \Lambda_{n-1}(X)$, for $n\geq 1$.
\end{enumerate}

\subsection{Differential graded algebras}
The definitions of this section are borrowed from \cite[section 2]{CY} and \cite{H2}.

Let $A$ be a $G$-graded algebra. A right $A$-module $M$ is called a graded right $A$-module, if there exists a
direct sum decomposition $M=\oplus_{g\in G}M_g$, where each $M_g$ is an additive subgroup of $M$ such that
$M_gA_h\subseteq M_{g+h}$ for all $g, h\in G$. Elements $m\in M_g$ are said to be homogeneous of degree $g$, and are denoted by $|m|=g$. We denote by $Gr$-$A$ the category of graded right $A$-modules with degree preserving morphisms, or $Gr^G$-$A$ to emphasis the category is $G$-graded.

One of the useful properties of $L_k(Q)$ is that it is a graded algebra. Let $G$ be a group with the identity element $e$ and $w:Q^1\rightarrow G$ be a weight map. Also let $w(\alpha^\ast)=w(\alpha)^{-1}$ and $w(v)=e$, for each $\alpha\in Q^1$ and $v\in Q^0$. Then the path algebra $k\widehat{Q}$ of the extended quiver $\widehat{Q}$ is a $G$-graded $k$-algebra and since Cuntz-Krieger relations are homogeneous, $L_k(Q)$ is a $G$-graded $k$-algebra. The natural grading given to a Leavitt path algebra is a $\mathbb{Z}$-grading by setting $w(\alpha)=1$,  $w(\alpha^\ast)=-1$ and $w(v)=0$, for each $\alpha\in Q^1$ and $v\in Q^0$. In this case the Leavitt path algebra can be decomposed as a direct sum of homogeneous components $L_k(Q)=\bigoplus_{n\in {\mathbb Z}} L_k(Q)_n$ satisfying $L_k(Q)_nL_k(Q)_m\subseteq L_k(Q)_{n+m}$. Actually, $$L_k(Q)_n=\text{span}_k\{pq^*: p,q\in \text{Path}(Q) , l(p)-l(q)=n \}.$$

Every element $x\in L_k(Q)_n$ is a homogeneous element of degree $n$.
An ideal $I$ is graded if it inherits the grading of $L_k(Q)$, that is, if $I=\bigoplus_{n\in {\mathbb Z}} (I\cap L_k(Q)_n)$.
Tomforde in \cite{T} (see also \cite[Theorem 3.5]{AMM}) proved that: (Graded Uniqueness Theorem) "Let $Q$ be a quiver and let $L_k(Q)$
be the associated Leavitt path algebra with the usual $\mathbb Z$-grading. If $A$ is a $\mathbb Z$-graded ring, and
$\pi :L_k(Q)\rightarrow A$ is a graded ring homomorphism with $\pi(v)\neq 0$ for all $v\in Q^0$, then $\pi$ is injective."

\begin{definition} (\cite[Definition 2.3.2]{H2}, \cite[Definition 1.]{H1}) {\rm Let $A$ and $B$ be $G$-graded algebras.

(1) Let $M$ be a graded right
$A$-module and $g\in G$. The graded right $A$-module $M(g) =\oplus_{h\in G}M(g)_h$, where $M(g)_h=M_{g+h}$, is called the \textit{$g$-suspended}, or \textit{$g$-shifted graded right $A$-module}.

(2) For each $g\in G$, the \textit{$g$-suspension functor} $\mathcal{T}_g:Gr$-$A\rightarrow Gr$-$A$, $M\mapsto M(g)$ is an isomorphism with the property $\mathcal{T}_g\mathcal{T}_h=\mathcal{T}_{g+h}$, for each $g, h\in G$.

(3) A functor $\phi:Gr$-$A\rightarrow Gr$-$B$ is called a \textit{graded functor} if $\phi\mathcal{T}_g=\mathcal{T}_g\phi$ for each $g\in G$.

(4) A graded functor $\phi:Gr$-$A\rightarrow Gr$-$B$ is called a \textit{graded equivalence} if there is a graded functor $\psi:Gr$-$B\rightarrow Gr$-$A$ such that $\psi\phi\cong_{gr}1_{Gr\textsc{-}A}$ and $\phi\psi\cong_{gr}1_{Gr\textsc{-}B}$.

(5) If there is a graded equivalence between $Gr$-$A$ and $Gr$-$B$, we say $A$ and $B$ are \textit{graded equivalent} or \textit{graded Morita equivalent} and we write $Gr$-$A\approx_{gr}Gr$-$B$, or $Gr^G$-$A\approx_{gr}Gr^G$-$B$ to emphasis the categories are $G$-graded.

(6) A functor $\phi:Mod$-$A\rightarrow Mod$-$B$ is called \textit{graded functor} if there is a graded functor $\phi':Gr$-$A\rightarrow Gr$-$B$ such that the following diagram, where the vertical functors are forgetful functor, commutes.

\[ \xymatrix{
  Gr\textsc{-}A\ar[d]_F \ar[r]^{\phi'}
& Gr\textsc{-}B  \ar[d]^F
\\
  Mod\textsc{-}A \ar[r]_\phi & Mod\textsc{-}B }
\]

The functor $\phi'$ is called an \textit{associated graded functor} of $\phi$.

(7) A functor $\phi:Mod$-$A\rightarrow Mod$-$B$ is called a \textit{graded equivalence} if it is graded and an equivalence.
}
\end{definition}

A differential graded algebra (or dg algebra) is a graded algebra $A$ with
a differential $d:A\rightarrow A$ of degree one such that $d(ab) = d(a)b + (-1)^{|a|}ad(b)$ for
homogeneous elements $a, b \in A$. We take a Leavitt path algebra $L_k(Q)$ as a differential graded algebra with a trivial differential. A right differential graded $A$-module (or right dg $A$-module) $M$ is a right
graded $A$-module $M=\oplus_{n\in \mathbb Z}M_n$ with a differential $d_M:M\rightarrow M$ such that $d_M(ma) = d_M(m)a + (-1)^{|m|}md(a)$ for homogeneous elements $a\in A$
and $m\in M$. A morphism between dg $A$-modules is a morphism of graded $A$-modules which
commutes with the differentials.
Let $C(A)$ be the category of dg $A$-modules and $M$ be a graded $A$-module. Chen and Yang in \cite{CY} defined a graded $A$-module $M[1]$ such that $M[1]_n = M_{n+1}$
and defined the $A$-action $\circ$ on $M[1]$ by $m\circ a = (-1)^{|a|}ma$. This gives rise to
the translation functor $[1] : Gr\textsc{-}A\rightarrow Gr\textsc{-}A$ of the category $Gr\textsc{-}A$.
The translation functor $[1]: C(A)\rightarrow C(A)$ is defined similarly and the differential is given by $d_{M[1]} =
-d_M$. We denote by $K(A)$ the homotopy category of right dg $A$-modules. Note that $K(A)$ is a triangulated
category. We denote
by $D(A)$ the derived category of dg $A$-modules, which is obtained from $C(A)$ by
formally inverting all quasi-isomorphisms. Denote by $Mod\textsc{-}A$ the abelian category of right $A$-modules, by $Inj\textsc{-}A$ the category of injective right $A$-modules and by $Proj\textsc{-}A$ the category of projective right $A$-modules. Consider $K(Inj\textsc{-}A)$ (resp. $K(Proj\textsc{-}A)$) the homotopy category of complexes of injective (resp. projective) A-modules, which is a triangulated
subcategory of $K(A)$. A complex $X\in C(A)$ is called acyclic if $H^n(X)=0$, for all $n\in \mathbb{Z}$. The triangulated subcategory of $K(Inj\textsc{-}A)$ (resp. $K(Proj\textsc{-}A)$) consisting of acyclic complexes will be denoted by $K_{ac}(Inj\textsc{-}A)$ (resp. $K_{ac}(Proj\textsc{-}A)$).

We denote by $Q^{op}$, the opposite quiver of $Q$ and by $L_k(Q)^{op}$,
the opposite algebra of $L_k(Q)$, which is also viewed as a differential graded algebra
with trivial differential. Chen and Yang in \cite{CY} proved the following proposition. Since we need this proposition in this paper we record it here.

\begin{proposition}\label{pro} \cite[Proposition 6.4]{CY} Let $Q$ and $Q'$ be finite quivers without sinks. Then the following statements are equivalent:

(1) The algebras $kQ/J^2$ and $kQ'/J^2$ are singularly equivalent;

(2) The Leavitt path algebras $L_k(Q)$ and $L_k(Q')$ are graded Morita equivalent;

(3) The Leavitt path algebras $L_k(Q)$ and $L_k(Q')$ are derived equivalent;

(4) The opposite Leavitt path algebras $L_k(Q)^{op}$ and $L_k(Q')^{op}$ are derived equivalent;

(5) There is a triangle equivalence $K_{ac}(Inj\textsc{-}kQ/J^2)\xrightarrow[]{\sim} K_{ac}(Inj\textsc{-}kQ'/J^2)$;

(6) There is a triangle equivalence $K_{ac}(Proj\textsc{-}kQ^{op}/J^2)\xrightarrow[]{\sim}K_{ac}(Proj\textsc{-}kQ'^{op}/J^2)$.

\end{proposition}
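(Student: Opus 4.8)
The proof rests on two equivalences recorded above: the Chen--Yang bridge
\[
K_{ac}(Inj\textsc{-}kQ/J^2)\simeq D(L_k(Q)^{op}),
\]
valid because $Q$ is finite and has no sinks, together with Krause's identification of $\mathbf{D}_{sg}(A)$ with the subcategory of compact objects of $K_{ac}(Inj\textsc{-}A)$, which under the bridge becomes $\mathbf{D}_{sg}(kQ/J^2)\simeq\mathbf{perf}(L_k(Q)^{op})$. The plan is to transport each of the six conditions to a statement about the dg algebra $L_k(Q)^{op}$ (or $L_k(Q)$) and to settle the resulting statements with Keller's dg Morita theory and the graded module theory of Section 2. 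Concretely I would prove $(1)\Leftrightarrow(4)\Leftrightarrow(5)$, then dispatch $(3)\Leftrightarrow(4)$ and $(5)\Leftrightarrow(6)$ as formal dualities, and finally establish $(2)\Leftrightarrow(3)$, which carries the genuine content.

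The equivalence $(4)\Leftrightarrow(5)$ is immediate: under the bridge a triangle equivalence of the two homotopy categories of acyclic complexes of injectives is the same datum as a triangle equivalence $D(L_k(Q)^{op})\simeq D(L_k(Q')^{op})$, that is, a derived equivalence of the opposite Leavitt path algebras. For $(1)\Leftrightarrow(4)$ I would pass to compact objects. A triangle equivalence $D(L_k(Q)^{op})\simeq D(L_k(Q')^{op})$ restricts to an equivalence of perfect objects, which under Krause's identification is exactly a singular equivalence $\mathbf{D}_{sg}(kQ/J^2)\simeq\mathbf{D}_{sg}(kQ'/J^2)$; conversely, because $D(B)$ is the algebraic compactly generated triangulated category built on $\mathbf{perf}(B)$, Keller's theorem lifts an equivalence $\mathbf{perf}(L_k(Q)^{op})\simeq\mathbf{perf}(L_k(Q')^{op})$ back to the ambient derived categories. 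The only delicate point is idempotent-completeness, since the compact part of a compactly generated category is automatically idempotent complete; for $kQ/J^2$ with $Q$ finite and sink-free this causes no harm, as $\mathbf{D}_{sg}$ already agrees with its completion.

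Next, $(3)\Leftrightarrow(4)$ is the opposite-invariance of (dg) derived equivalence: such an equivalence is realised by a two-sided tilting complex, and $(-)^{op}$ turns it into a tilting complex for the opposite algebras, so $L_k(Q)\sim L_k(Q')$ if and only if $L_k(Q)^{op}\sim L_k(Q')^{op}$ (this is also visible through the canonical involution $L_k(Q)\cong L_k(Q)^{op}$, once the reversal of the grading is accounted for). For $(5)\Leftrightarrow(6)$ I would use the $k$-linear duality $D=\mathrm{Hom}_k(-,k)$, which identifies $kQ^{op}/J^2$ with $(kQ/J^2)^{op}$ and interchanges injective and projective modules, so that it induces a triangle equivalence $K_{ac}(Proj\textsc{-}kQ^{op}/J^2)\simeq K_{ac}(Inj\textsc{-}kQ/J^2)^{op}$ (equivalently, one invokes the projective form of the bridge); applying it on both sides converts $(5)$ into $(6)$. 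Both steps are formal once the functors involved are checked to preserve acyclicity and compactness.

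The main obstacle is $(2)\Leftrightarrow(3)$, which relates graded Morita equivalence of the $\mathbb{Z}$-graded algebras $L_k(Q),L_k(Q')$ to their dg derived equivalence. For $(2)\Rightarrow(3)$ I would use that a graded equivalence $Gr\textsc{-}L_k(Q)\approx_{gr}Gr\textsc{-}L_k(Q')$ is induced by a graded $(L_k(Q),L_k(Q'))$-bimodule $P$ that is a graded progenerator; regarding both algebras as dg algebras with trivial differential and the natural $\mathbb{Z}$-grading as homological degree, $P$ becomes a two-sided tilting object and hence induces the triangle equivalence $D(L_k(Q))\simeq D(L_k(Q'))$ required for $(3)$. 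The converse $(3)\Rightarrow(2)$ is the delicate direction: one must promote an abstract dg derived equivalence to a graded equivalence of module categories. Here I would exploit the special homological nature of Leavitt path algebras in their natural grading---by Hazrat's theorem they are graded von Neumann regular, so every graded module is graded flat and higher graded self-extensions vanish---to force the tilting object produced by Keller's theorem to be, up to suspension, a graded progenerator, whence one reads off a graded Morita context. Controlling the grading on this tilting complex, and checking that the progenerator property survives, is the technical heart of the argument and the step I expect to be hardest; once it is in place, the remaining implications assemble from the equivalences above to close the chain.
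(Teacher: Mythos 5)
The paper contains no proof of this proposition: it is imported verbatim from Chen and Yang (\cite[Proposition 6.4]{CY}), with the text explicitly saying it is merely being recorded for later use. So there is no in-paper argument to compare yours against; the relevant benchmark is the proof in \cite{CY}. Measured against that, your skeleton is largely the right one --- the bridge $K_{ac}(Inj\textsc{-}kQ/J^2)\simeq D(L_k(Q)^{op})$, Krause's compact-objects identification $\mathbf{D}_{sg}(kQ/J^2)\simeq\mathbf{perf}(L_k(Q)^{op})$, the involution $\alpha\leftrightarrow\alpha^*$ for $(3)\Leftrightarrow(4)$, and $k$-linear duality for $(5)\Leftrightarrow(6)$ --- but it diverges on the central mechanism. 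Chen and Yang do not route through tilting bimodules and dg Morita theory; the key input is that $L_k(Q)$ is graded von Neumann regular, so that every dg module over the trivial-differential dg algebra $L_k(Q)$ is formal, $D(L_k(Q))$ collapses to the abelian category $Gr$-$L_k(Q)$ with translation corresponding to degree shift, and $\mathbf{perf}(L_k(Q))$ becomes the category of finitely generated graded projective modules. After that collapse, both $(2)\Leftrightarrow(3)$ and the passage from $(1)$ back up to $(4)$ are pure graded Morita theory (Gordon--Green), and the ``delicate direction'' you flag essentially evaporates.

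The genuine soft spot in your version is exactly where you lean on Keller. An abstract triangle equivalence $\mathbf{perf}(L_k(Q)^{op})\simeq\mathbf{perf}(L_k(Q')^{op})$ --- which is all that condition $(1)$ hands you --- produces a compact generator of $D(L_k(Q)^{op})$ whose dg endomorphism algebra is a priori only known to have the correct graded endomorphism ring in the triangulated category; without a formality argument it need not be quasi-isomorphic to $L_k(Q')^{op}$, so the lift to $D(L_k(Q)^{op})\simeq D(L_k(Q')^{op})$ does not follow formally. You do eventually invoke graded von Neumann regularity, but only in $(3)\Rightarrow(2)$; it has to be deployed already at $(1)\Rightarrow(4)$, and the ``technical heart'' you defer to the end is precisely the content that the formality/regularity result supplies once and for all. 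As written, the proposal is a plausible reconstruction of the known proof with that step left open rather than a complete argument.
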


\subsection{Noncommutative algebraic geometry}

Let $\Lambda$ be a noncommutative algebra, $Gr$-$\Lambda$ the category of $\mathbb{Z}$-graded right $\Lambda$-modules and $Fdim$-$\Lambda$ the full subcategory of $Gr$-$\Lambda$ consisting of modules that are the sum of their finite dimensional submodules. The category $Fdim$-$\Lambda$ is a Serre subcategory of $Gr$-$\Lambda$. The quotient category $QGr(\Lambda)=Gr\textsc{-}\Lambda/Fdim\textsc{-}\Lambda$ is one of the fundamental constructions in noncommutative algebraic geometry. About 60 years ago, Serre proved that: If $\Lambda$ is a commutative algebra which is generated by a finite number of
elements of degree one, then the category of quasi-coherent
sheaves on the scheme $Proj(\Lambda)$ is equivalent to $QGr(\Lambda)$. In Noncommutative algebraic geometry a noncommutative scheme $Proj_{nc}(\Lambda)$ is defined such that the category of quasi-coherent sheaves on $Proj_{nc}(\Lambda)$ is $QGr(\Lambda)$. When $\Lambda$ is a coherent algebra, $qgr(\Lambda):= gr\textsc{-}\Lambda/fdim\textsc{-}\Lambda$ is viewed
as the category of coherent sheaves on $Proj_{nc}(\Lambda)$, where $gr\textsc{-}\Lambda$ is the full subcategory of $Gr\textsc{-}\Lambda$ consisting of finitely presented graded right $\Lambda$-modules and $fdim\textsc{-}\Lambda$ is the full subcategory of $gr\textsc{-}\Lambda$ consisting of finite dimensional modules. Paul Smith in \cite{PS1} studied the connection between the category $QGr(kQ)$, where $Q$ is a finite quiver, and the category of graded modules over Leavitt path algebra $L_k(Q)$. He also proved that: "If $Q$ is a finite quiver, then the triangulated categories $\mathbf{D}_{sg}(kQ/J^2)$ and $qgr(kQ)$ are equivalent" \cite[Theorem 1.6]{PS1}. Let $Q$ and $Q'$ be finite quivers without sinks, if the Leavitt path algebras $L_k(Q)$ and $L_k(Q')$ are graded Morita equivalent, then by Proposition \ref{pro} and \cite[Theorem 1.6]{PS1} we have $qgr(kQ)$ and $qgr(kQ')$ are equivalent.

Let $A$ and $B$ be square matrices with entries in $\mathbb{N}$. If there is a pair of matrices $L$ and $R$ with non-negative integer
entries such that $A = LR$ and $B = RL$ then we say that $A$ and $B$ are elementary strong shift equivalence \cite[Definition 7.2.1]{LM}.
$A$ and $B$ are called strong shift equivalent if there is a chain of elementary
strong shift equivalences from $A$ to $B$. Also $A$ and $B$ are called shift equivalent if there are non-negative integer matrices $C$ and $D$ such that $A^n=CD$ and $B^n=DC$, for some $n\in \mathbb{N}$, $AC=CB$ and $DA=BD$. Note that the notion of (strong) shift equivalence comes from symbolic dynamics (for more details see \cite{LM}).  Recently Smith in \cite{PS} proved that: "If the incidence matrices of quivers $Q$ and $Q'$ are strong shift equivalent, then $QGr(kQ)$ and $QGr(kQ')$ are equivalent". Hazrat in \cite[Proposition 15]{H1} proved that: "For finite quivers $Q$ and $Q'$ with no sinks, if $L_k(Q)$ and $L_k(Q')$ are graded Morita equivalent, then the incidence matrices of quivers $Q$ and $Q'$ are shift equivalent". He also proved that: "For finite quivers $Q$ and $Q'$ with no sinks, if the incidence matrices of quivers $Q$ and $Q'$ are shift equivalent, then $QGr(kQ)$ and $QGr(kQ')$ are equivalent" (\cite[Corollary 24]{H1}). Combining this results we have: for finite quivers $Q$ and $Q'$ with no sinks if $L_k(Q)$ and $L_k(Q')$ are graded Morita equivalent, then $QGr(kQ)$ and $QGr(kQ')$ are equivalent.


\section{Main theorem and its applications}

In this section we give the main theorem of this paper and some
applications of the main theorem. Also we show that several known
results are corollaries of the main theorem.

The following
theorem is our main theorem, which we will prove it in the section 4. We
denote by $(1/n)\mathbb{Z}$, the cyclic subgroup of the rational numbers generated by $1/n$.

\begin{theorem}\label{theresult} Let $Q$ be a quiver with finite number of vertices (but it can have vertices which are infinite emitters), $k$ an arbitrary field, $Q'^0\subset Q^0$ such that $Q^0_{\text{sing}}\subseteq Q'^0$ and if $\mu\in \text{Path}(Q)$ is a cycle then $Q'^0\cap \mu ^{0}\neq \emptyset$. Let $Q'$ be the quiver with vertex set $Q'^0$ and one edge $e_\mu$ for
each path $\mu\in \text{Path}(Q)\backslash Q^0$ with $s(\mu),
r(\mu)\in Q'^0$ and $r(\mu_i)\in Q^0\backslash Q'^0$ for $1\leq
i<l(\mu)$, such that $s(e_\mu)=s(\mu)$ and $r(e_\mu)=r(\mu)$, and let
$n=max\{l(\mu)|\mu\in \text{Path}(Q)\backslash Q^0, s(\mu),
r(\mu)\in Q'^0\hskip.1cm and\hskip.1cm  r(\mu_i)\in Q^0\backslash Q'^0\hskip.1cm for\hskip.1cm 1\leq
i<l(\mu)\}$. Then $L_k(Q')$ and $L_k(Q)$ are
$(1/n)\mathbb{Z}$-graded Morita equivalent.
\end{theorem}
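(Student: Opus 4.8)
The plan is to exhibit $L_k(Q')$ as a full homogeneous corner of $L_k(Q)$ and then invoke the graded Morita theorem for corner rings. I would grade both algebras by $G=(1/n)\mathbb{Z}$: on $L_k(Q)$ give every edge of $Q$ degree $1/n$ (this is the usual $\mathbb{Z}$-grading transported along $\mathbb{Z}\xrightarrow{\sim}(1/n)\mathbb{Z}$, $m\mapsto m/n$, so a path $\mu$ is homogeneous of degree $l(\mu)/n$); on $L_k(Q')$ give each edge $e_\mu$ the degree $l(\mu)/n$. Put $\varepsilon=\sum_{v\in Q'^0}v\in L_k(Q)$; since $Q^0$ is finite this is a genuine idempotent, homogeneous of degree $0$, and $1=\sum_{v\in Q^0}v$. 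The two things to prove are (i) $\varepsilon L_k(Q)\varepsilon\cong L_k(Q')$ as $G$-graded algebras and (ii) $L_k(Q)\varepsilon L_k(Q)=L_k(Q)$.

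For (i) I would define a $Q'$-family in $\varepsilon L_k(Q)\varepsilon$ by $v\mapsto v$, $e_\mu\mapsto\mu$, $e_\mu^\ast\mapsto\mu^\ast$ and check the Cuntz--Krieger relations. Relations (1), (2) and (CK1) are routine from the path calculus; the only point is $\mu^\ast\nu=0$ for distinct generating paths, which holds because, by the minimality built into the definition of $Q'$, neither of two distinct such paths can be an initial segment of the other (its range would have to lie both inside and outside $Q'^0$). The substantial relation is (CK2): a vertex $v\in Q'^0$ is regular in $Q'$ exactly when it is regular in $Q$ (here $Q^0_{\text{sing}}\subseteq Q'^0$ is used, so that no path can terminate outside $Q'^0$), and for such $v$ I would repeatedly expand the relation $v=\sum_{s(\alpha)=v}\alpha\alpha^\ast$ of $L_k(Q)$, replacing $\alpha\alpha^\ast$ by $\sum_{s(\beta)=r(\alpha)}(\alpha\beta)(\alpha\beta)^\ast$ whenever $r(\alpha)\notin Q'^0$ (such a range is regular). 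Because every cycle of $Q$ meets $Q'^0$ and $Q^0$ is finite, no path can stay in $Q^0\setminus Q'^0$ indefinitely, so König's lemma makes the tree of these expansions finite and the process terminates at $v=\sum_{s(e_\mu)=v}\mu\mu^\ast$, which is (CK2) for the family. The Universal Homomorphism Property then gives a $G$-graded homomorphism $\phi:L_k(Q')\to L_k(Q)$ landing in $\varepsilon L_k(Q)\varepsilon$; identifying $G\cong\mathbb{Z}$ and using $\phi(v)=v\neq0$, the Graded Uniqueness Theorem makes $\phi$ injective.

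The step I expect to be the main obstacle is the reverse inclusion $\varepsilon L_k(Q)\varepsilon\subseteq\phi(L_k(Q'))$. The corner is spanned by elements $pq^\ast$ with $r(p)=r(q)$ and $s(p),s(q)\in Q'^0$. I would cut $p$ at its successive vertices in $Q'^0$, writing $p=\mu_1\cdots\mu_t\,\rho$ with each $\mu_j$ a generating path and $\rho$ a tail all of whose vertices past its source lie in $Q^0\setminus Q'^0$, and likewise $q=\nu_1\cdots\nu_{t'}\,\sigma$. If $w:=r(p)=r(q)\in Q'^0$ the tails vanish and $pq^\ast$ is plainly in the image; otherwise $w$ is regular, and expanding $w$ as in the previous paragraph yields $w=\sum_\gamma\gamma\gamma^\ast$ over generating paths $\gamma$ from $w$ into $Q'^0$, whence $\rho\sigma^\ast=\rho\,w\,\sigma^\ast=\sum_\gamma(\rho\gamma)(\sigma\gamma)^\ast$ with $\rho\gamma,\sigma\gamma$ again generating paths. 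Substituting back gives $pq^\ast=\phi\big(\sum_\gamma e_{\mu_1}\cdots e_{\mu_t}\,e_{\rho\gamma}\,e_{\sigma\gamma}^\ast\,e_{\nu_{t'}}^\ast\cdots e_{\nu_1}^\ast\big)$, so $\phi$ is onto the corner. As $\phi$ preserves degrees ($\deg e_\mu=l(\mu)/n=\deg\mu$), it is a $G$-graded isomorphism $L_k(Q')\xrightarrow{\sim}\varepsilon L_k(Q)\varepsilon$.

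For (ii) I would examine $H=\{v\in Q^0:v\in L_k(Q)\varepsilon L_k(Q)\}$. It is hereditary (if $v\in H$ and $s(\alpha)=v$ then $r(\alpha)=\alpha^\ast v\alpha\in H$) and saturated (if $v$ is regular with $r(s^{-1}(v))\subseteq H$ then $v=\sum_\alpha\alpha\,r(\alpha)\,\alpha^\ast\in H$), and it contains $Q'^0$. If $H\neq Q^0$ then every vertex of the finite set $Q^0\setminus H$ is regular (as $Q^0_{\text{sing}}\subseteq Q'^0\subseteq H$) and by saturation emits an edge back into $Q^0\setminus H$; following such edges produces a cycle contained in $Q^0\setminus H$, hence disjoint from $Q'^0$, contradicting the hypothesis. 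Thus $H=Q^0$, so $1=\sum_{v\in Q^0}v\in L_k(Q)\varepsilon L_k(Q)$ and $\varepsilon$ is a full homogeneous idempotent. The graded Morita theorem for a degree-$0$ full idempotent then gives $Gr^{G}\textsc{-}L_k(Q)\approx_{gr}Gr^{G}\textsc{-}\varepsilon L_k(Q)\varepsilon$, and combining with (i) yields $Gr^{G}\textsc{-}L_k(Q)\approx_{gr}Gr^{G}\textsc{-}L_k(Q')$, i.e. $L_k(Q)$ and $L_k(Q')$ are $(1/n)\mathbb{Z}$-graded Morita equivalent.
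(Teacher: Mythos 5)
Your strategy is the paper's strategy almost step for step: you realize $L_k(Q')$ as the corner $\varepsilon L_k(Q)\varepsilon$ by exhibiting the $Q'$-family $\{q_v=v,\ T_{e_\mu}=\mu\}$ (your K\"onig-lemma expansion of CK-2 is Lemma \ref{firstlemma}(iii), your ``no initial segment'' observation is the key point of Lemma \ref{secondlemma1}); you prove $\varepsilon L_k(Q)\varepsilon=\pi(L_k(Q'))$ by cutting $pq^{\ast}$ at its visits to $Q'^0$ and expanding the common range when it lies outside $Q'^0$ (the paper's Cases 1 and 2); and you prove fullness of $\varepsilon$ by showing $Q^0$ is the hereditary saturated closure of $Q'^0$ before invoking Lemma \ref{secondlemma}. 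All of that is correct and matches the paper.

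The genuine defect is in the grading, which is the one delicate point of the theorem. By giving every edge of $Q$ degree $1/n$ you force the corner, hence $L_k(Q')$, to carry the grading in which $e_\mu$ has degree $l(\mu)/n$. That is not the standard grading of $L_k(Q')$ and not the grading the theorem intends: one of the remarks following Theorem \ref{theresult} records that in this equivalence the support of $L_k(Q')$ is $\mathbb{Z}$, i.e.\ $L_k(Q')$ carries its usual grading with every edge in degree $1$. This is not a cosmetic issue, because the applications depend on it: Corollary \ref{cor12} upgrades the $(1/2)\mathbb{Z}$-graded equivalence to a $\mathbb{Z}$-graded one via Proposition \ref{l}, which needs the supports to lie in $\mathbb{Z}$, and the singular-equivalence conclusion via Proposition \ref{pro} concerns the standard $\mathbb{Z}$-gradings. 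The paper instead chooses a nonuniform weight function $w:Q^1\to(1/n)\mathbb{Z}$ arranged so that every generating path $\mu\in\mathcal{A}$ has total weight exactly $1$ regardless of its length (edges of a maximal-length path get $1/n$ each; on shorter paths the last edge absorbs the deficit). With that choice $T_{e_\mu}$ has degree $1$, so $\pi$ is a morphism of $\mathbb{Z}$-graded rings in the usual sense and Tomforde's Graded Uniqueness Theorem applies literally, whereas your appeal to it for a nonstandard grading on $L_k(Q')$ is not covered by the statement as quoted. Everything else in your argument survives unchanged once you replace the uniform weight by such a weight function.
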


\begin{remark}  {\rm Note that for a given quiver $Q$ and
different subsets $Q'^0, Q''^0\subset Q^0$ which satisfy the hypotheses of the
Theorem \ref{theresult} we get different graded Morita
equivalences. For example, let
$$\begin{matrix}Q:\xymatrix{{\bullet_1}\ar[r]^\alpha&{\bullet_2}\ar[r]^\beta&{\bullet_3}}
\end{matrix}\hskip.5cm$$ $Q'^0=\{2, 3\}$ and $Q''^0=\{1, 3\}$. Then $$\begin{matrix}Q':\xymatrix{{\bullet_2}\ar[r]^\beta&{\bullet_3}}
\end{matrix}\hskip.5cm$$ $$\begin{matrix}Q'':\xymatrix{{\bullet_1}\ar[r]&{\bullet_3}}
\end{matrix}\hskip.5cm$$ By Theorem \ref{theresult}, $L_k(Q')$ and $L_k(Q)$ are
$\mathbb{Z}$-graded Morita equivalent, but $L_k(Q'')$ and $L_k(Q)$
are $(1/2)\mathbb{Z}$-graded Morita equivalent.}
\end{remark}

\begin{remark}  {\rm Note that in Theorem \ref{theresult}, $L_k(Q')$ and $L_k(Q)$ are not $\mathbb{Z}$-graded Morita equivalent in general. For example, let $$\begin{matrix}Q:\xymatrix{{\bullet_1}\ar@/^/[rr]^\alpha&&{\bullet_2}\ar@/^/[ll]^{\beta}}
\end{matrix}\hskip.5cm$$ and $Q'^0=\{1\}$. Then $L_k(Q')$ and $L_k(Q)$ are not $\mathbb{Z}$-graded Morita equivalent but they are $(1/2)\mathbb{Z}$-graded Morita equivalent.
}
\end{remark}

Let $G$ be a group and $A=\bigoplus_{g\in G}A_g$ be a $G$-graded algebra. The set $G_A=\{g\in G| A_g\neq 0\}$ is called the support of $A$.

\begin{remark}  {\rm Note that in Theorem \ref{theresult} the support of $L_k(Q')$, $(1/n)\mathbb{Z}_{L_k(Q')}$ is $\mathbb{Z}$, but the support of $L_k(Q)$, $(1/n)\mathbb{Z}_{L_k(Q)}$, is not equal to $\mathbb{Z}$ in general.}
\end{remark}

\begin{example}\label{exa}(i) Consider the quiver $Q$ given by
$$\xymatrix{
&&&&&&{\bullet}^A \ar[dl]\ar[dll] \ar@{>}[dlll] \ar@{<-}[drrr]\ar@{<-}[drr] \ar@{<-}[dr]\\
&&&{\bullet}^{v_n}\ar@{.}[r] \ar@{>}[drrr]&{\bullet}^{v_2}\ar[drr]&{\bullet}^{v_1}\ar[dr]&&{\bullet}^{v'_1}\ar@{<-}[dl]&{\bullet}^{v'_2}\ar@{.}[r]\ar@{<-}[dll]&{\bullet}^{v'_m}\ar@{<-}[dlll]&\\&&&&&&{\bullet}_B}
$$

Letting $Q'^0=\{A\}$, one can observe that the hypothesis of Theorem \ref{theresult} are satisfied. Then $L_k(Q)$ and $L_k(1,mn)$ are $(1/4)\mathbb{Z}$-graded Morita equivalent.\\

\label{exb}(ii) Consider the quiver $Q$ given by

$$\xymatrix{
&&&&&&&{\bullet}^A \ar@{<-}[d] \ar[dl]\ar[dll] \ar@{->}[dlll] \ar@{->}[drrr] \ar[drr]  \ar[dr]\\
&&&&{\bullet}^{v_n}\ar@{.}[r]\ar@{->}[drrr] &{\bullet}^{v_2}\ar[drr]&{\bullet}^{v_1}\ar[dr]&{\bullet}^{c}\ar@{<-}[d]&{\bullet}^{v'_1}\ar[dl]&{\bullet}^{v'_2}\ar@{.}[r]\ar[dll]&{\bullet}^{v'_m}\ar@{->}[dlll]&\\&&&&&&&{\bullet}_B}$$

Let $Q'^0=\{A\}$, it is easy to see that the hypothesis of Theorem
\ref{theresult} are satisfied and hence $L_k(Q)$ is
$(1/4)\mathbb{Z}$-graded Morita equivalent to $L_k(1,m+n)$.
\end{example}

\subsection{Elementary strong shift equivalence}

\begin{definition}\label{def1}(\cite[Definition 5.1]{B})
Let $Q_i=(Q_i^0,Q_i^1,r_i,s_i)$ for $i=1,2$ be quivers. Suppose that
there is a quiver $Q_3=(Q_3^0,Q_3^1,r_3,s_3)$ which has the following properties:

(i) $Q_3^0=Q_1^0\cup Q_2^0$, and $Q_1^0\cap Q_2^0=\emptyset$.

(ii) $Q_3^1=Q_{12}^1\cup Q_{21}^1$ where $Q_{ij}^1:=\{e\in Q_3^1 : s_3(e)\in Q_i^0, r_3(e)\in Q_j^0\}$.

(iii) For $i\in \{1,2\}$, there exist source and range-preserving bijections $\theta_i: Q_i^1\rightarrow Q_3^2(Q_i^0,Q_i^0)$
where for $i\in \{1,2\}$, $Q_3^2(Q_i^0,Q_i^0):=\{\alpha\in Q_3^2 : s_3(\alpha)\in Q_i^0, r_3(\alpha)\in Q_i^0\}$.

Then we say that $Q_1$ and $Q_2$ are elementary strong shift equivalent $(Q_1\sim_{ES}Q_2)$ via $Q_3$.
We define strong shift equivalence (denoted by $\sim_S$) to be the equivalence relation on
row-finite quivers generated by elementary strong shift equivalence. It is easy to see that finite quivers $Q_1$ and $Q_2$ are elementary strong shift equivalent if and only if incidence matrices $B_{Q_1}$ and $B_{Q_2}$ are elementary strong shift equivalence. Note that in this case if  $B_{Q_1}=LR$ and $B_{Q_2}=RL$, where $L$ and $R$ are matrices with non-negative integer
entries, then $B_{Q_3}=\left(\begin{array}{cc}
0&L\\
R&0\\
\end{array}\right)$. \\\\
\end{definition}

\begin{example}\label{example 3}
Let
$\begin{matrix}Q_1:\xymatrix{{\bullet}\ar[r]&{\bullet}\ar@(ur,dr)}
\end{matrix}\hskip.5cm$ , $\hskip1cm$ $\begin{matrix}Q_2:\xymatrix{{\bullet}\ar@<.5ex>[r]\ar@<-.5ex>[r]&{\bullet}\ar@(ur,dr)}
\end{matrix}\hskip.5cm$ and

 $$\begin{matrix}Q_3: \xymatrix{
&{\bullet}\ar@<.5ex>[d]\ar@<-.5ex>[d]&{\bullet}\ar[d]\\
&{\bullet}\ar[ur]&{\bullet}\ar@<1ex>[u]}
\end{matrix}$$

Then it is easy to see that $Q_1$ and $Q_2$ are elementary strong shift equivalent via $Q_3$.\\
\end{example}

\begin{corollary}\label{cor2} Let $Q_1$ and $Q_2$ be finite quivers with no
sinks which are elementary strong shift equivalent via the quiver
$Q_3$. Then the Leavitt path algebras $L_k(Q_1)$ and $L_k(Q_2)$ are
$(1/2)\mathbb{Z}$-graded Morita equivalent. Also if
$Q_1\sim_{S}Q_2$, then $L_k(Q_1)$ and $L_k(Q_2)$ are
$(1/2)\mathbb{Z}$-graded Morita equivalent.
\end{corollary}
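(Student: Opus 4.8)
The plan is to obtain the corollary as a double application of Theorem \ref{theresult} to the intermediary quiver $Q_3$, followed by transitivity. Concretely, I would set $Q=Q_3$ in the theorem and first take $Q'^0=Q_1^0$. Before invoking the theorem I must verify its hypotheses. Since $Q_1$ and $Q_2$ are finite, $Q_3$ has a finite vertex set. It has no sinks: if $v\in Q_1^0$ then, as $Q_1$ has no sinks, $v$ emits some $\alpha\in Q_1^1$, and by condition (iii) of Definition \ref{def1} the source-preserving path $\theta_1(\alpha)\in Q_3^2(Q_1^0,Q_1^0)$ begins with an edge of $Q_3$ issuing from $v$; the same argument with $\theta_2$ handles $v\in Q_2^0$, so $Q^0_{3,\mathrm{sing}}=\emptyset\subseteq Q_1^0$. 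Finally, by condition (ii) every edge of $Q_3$ runs between $Q_1^0$ and $Q_2^0$, so $Q_3$ is bipartite and any cycle must alternate between the two parts; in particular $Q_1^0\cap\mu^0\neq\emptyset$ for every cycle $\mu$.

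The heart of the argument is the identification of the collapsed quiver $Q'$ with $Q_1$. A path $\mu=\alpha_1\cdots\alpha_l$ contributing an edge $e_\mu$ must satisfy $s(\mu),r(\mu)\in Q_1^0$ with all interior vertices $r(\alpha_i)$ ($1\leq i<l$) in $Q^0_3\setminus Q_1^0=Q_2^0$. Bipartiteness forces the vertices of $\mu$ to alternate between $Q_1^0$ and $Q_2^0$, so beginning and ending in $Q_1^0$ while keeping every interior vertex in $Q_2^0$ is possible only when $l=2$. Hence the contributing paths are exactly the elements of $Q_3^2(Q_1^0,Q_1^0)$, and $n=2$. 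Since $\theta_1$ is a source- and range-preserving bijection $Q_1^1\to Q_3^2(Q_1^0,Q_1^0)$, the assignment $e_\mu\mapsto\theta_1^{-1}(\mu)$ together with the identity on vertices is an isomorphism of quivers $Q'\xrightarrow{\sim}Q_1$, whence $L_k(Q')\cong L_k(Q_1)$ as graded algebras. Theorem \ref{theresult} then gives that $L_k(Q_1)$ and $L_k(Q_3)$ are $(1/2)\mathbb{Z}$-graded Morita equivalent.

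Running the identical argument with $Q'^0=Q_2^0$ (using $\theta_2$) shows that $L_k(Q_2)$ and $L_k(Q_3)$ are also $(1/2)\mathbb{Z}$-graded Morita equivalent. Since $(1/2)\mathbb{Z}$-graded Morita equivalence is an equivalence relation, composing these two equivalences yields that $L_k(Q_1)$ and $L_k(Q_2)$ are $(1/2)\mathbb{Z}$-graded Morita equivalent, which is the first assertion. For the second assertion I would write $Q_1\sim_S Q_2$ as a finite chain of elementary strong shift equivalences through finite quivers without sinks, apply the first part to each link, and compose, again using transitivity of $(1/2)\mathbb{Z}$-graded Morita equivalence.

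The combinatorial core --- that only length-two paths survive the collapse and that $\theta_1$ matches them bijectively (preserving endpoints) with the edges of $Q_1$ --- is essentially forced by the bipartite structure in Definition \ref{def1} and condition (iii), so I expect it to be routine. The two points that genuinely need care, and which I regard as the main obstacle, are: first, checking that both collapses produce equivalences for the \emph{same} grading group $(1/2)\mathbb{Z}$, so that the isomorphism $Q'\cong Q_1$ indeed transports the grading used in Theorem \ref{theresult} correctly and the two equivalences may be composed; and second, in the strong shift case, ensuring that each intermediate quiver in the chain still has no sinks (equivalently, $Q^0_{\mathrm{sing}}=\emptyset$ for each intermediary $Q_3$-type quiver), so that the hypotheses of Theorem \ref{theresult} and Definition \ref{def1} remain satisfied at every step.
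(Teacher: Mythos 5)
Your proposal is correct and follows exactly the paper's own route: set $Q=Q_3$ with $Q'^0=Q_1^0$ (so that the collapsed quiver is $Q_1$ and $n=2$), then with $Q'^0=Q_2^0$, apply Theorem \ref{theresult} twice, and compose; the strong shift case is handled by chaining. The only difference is that you spell out the hypothesis checks (no singular vertices in $Q_3$, bipartiteness forcing cycles to meet $Q_1^0$ and collapsed paths to have length exactly $2$, and the identification of $Q'$ with $Q_1$ via $\theta_1$) which the paper leaves implicit.
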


\begin{proof}
Setting $Q=Q_3$ and $Q'^0=Q_1^0$, then $max\{l(\mu)|\mu\in \text{Path}(Q)\backslash Q^0, s(\mu),
r(\mu)\in Q'^0\hskip.1cm and\hskip.1cm  r(\mu_i)\in Q^0\backslash Q'^0\hskip.1cm for\hskip.1cm 1\leq
i<l(\mu)\}=2$. Thus by Theorem \ref{theresult}, $L_k(Q_1)$ and $L_k(Q_3)$ are
$(1/2)\mathbb{Z}$-graded Morita equivalent. Now setting $Q=Q_3$ and $Q'^0=Q_2^0$, again by Theorem \ref{theresult}, $L_k(Q_2)$ and $L_k(Q_3)$ are
$(1/2)\mathbb{Z}$-graded Morita equivalent and the result follows.
\end{proof}

Hazrat in Theorem 30 of \cite{H1} proved Corollary
\ref{cor2} and asked the following question:
"Let $Q_1$ and $Q_2$ be finite quivers with no sinks which are
elementary strong shift equivalent via the quiver $Q_3$. Whether
$L_k(Q_1)$ and $L_k(Q_2)$ are actually $\mathbb{Z}$-graded Morita
equivalent?" \\
The following proposition gives the positive answer of the Hazrat's
question.

\begin{proposition}\label{l} Let $H$ be a subgroup of a group $G$, $R$ and $S$ be $G$-graded rings such that $G_R, G_S\subseteq H$. If $R$ and $S$ are graded Morita equivalent as $G$-graded rings, then $R$ and $S$ are graded Morita equivalent as $H$-graded rings.
\end{proposition}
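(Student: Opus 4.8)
The plan is to transport the statement to invertible graded bimodules, decompose them along the cosets of $H$ in $G$, and then reassemble the pieces into an $H$-graded equivalence. The starting observation is elementary but decisive: since $G_R,G_S\subseteq H$, the rings $R$ and $S$ are in fact $H$-graded, and for any $G$-graded right $R$-module $M$ the right $R$-action keeps each homogeneous component inside a single coset of $G/H$. Hence $M$ splits as a direct sum of $R$-submodules indexed by $G/H$, giving $Gr^G$-$R\cong\prod_{C\in G/H}\mathcal{A}^R_C$, where $\mathcal{A}^R_C$ is the full subcategory of modules supported on $C$. The identity block $\mathcal{A}^R_H$ is exactly $Gr^H$-$R$, and the suspensions $\mathcal{T}_h$ with $h\in H$ restrict there to the $H$-graded suspension; likewise for $S$. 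So it suffices to descend the given equivalence to the identity blocks.

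By graded Morita theory (\cite{H2}) the hypothesis yields mutually inverse $G$-graded bimodules $M$ and $N$ with $M\otimes_S N\cong R$ and $N\otimes_R M\cong S$ as graded bimodules. Treating $H$ as normal in $G$ (the relevant case, $G$ being abelian in the application), both one-sided actions preserve cosets, so $M=\bigoplus_C M_C$ and $N=\bigoplus_C N_C$ as graded sub-bimodules, and $M_C\otimes_S N_{C'}$ is supported on the coset $CC'$. Because $R$ lives on the identity coset $\bar e$, every term with $CC'\ne\bar e$ vanishes and $R=\bigoplus_C M_C\otimes_S N_{C^{-1}}$ is a direct sum of two-sided graded ideals; this produces orthogonal homogeneous central idempotents $e_C\in R_e$ with $R=\prod_C e_CR$, and symmetrically $f_C\in S_e$ with $S=\prod_C f_CS$. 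Only finitely many are nonzero, since they sum to $1$, so no hypothesis on $[G:H]$ is needed.

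The payoff of the decomposition is that, after relabelling the $f_C$ to match, each coset-homogeneous piece $M_C$ is an invertible graded bimodule between the corner $e_CR$ of $R$ and the corner $f_CS$ of $S$, and is concentrated on a single coset $C=tH$. Shifting by $\mathcal{T}_t$ moves $M_C$ and its inverse onto the identity coset without affecting invertibility, so the shifted isomorphisms exhibit $e_CR\approx_{gr}f_CS$ as $H$-graded rings. Reassembling over the common index set then gives
$Gr^H$-$R\cong\prod_C Gr^H$-$(e_CR)\approx_{gr}\prod_C Gr^H$-$(f_CS)\cong Gr^H$-$S$,
the desired $H$-graded Morita equivalence.

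I expect the main obstacle to be the tempting shortcut of simply restricting $M$ to its $H$-component: this fails because $M$ genuinely need not be supported on a single coset --- a rank-one bimodule over $k\times k$ with $G=\mathbb{Z}/2$ already spreads over both cosets --- and it is precisely the central-idempotent splitting that rescues the argument by reducing to single-coset pieces. A secondary difficulty is the non-normal case, where $G/H$ is only a set and the clean product $CC'$ is unavailable; there one would first group the $M_C$ according to double cosets in $H\backslash G/H$ and work on each double-coset corner before the single-coset shift can be applied.
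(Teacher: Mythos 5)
Your route is genuinely different from the paper's and, for normal $H$, it is essentially sound: passing to an invertible $G$-graded bimodule, decomposing it along cosets of $H$, extracting the orthogonal central homogeneous idempotents from $R\cong\bigoplus_C M_C\otimes_S N_{C^{-1}}$, and shifting each single-coset block onto the identity coset does produce an $H$-graded equivalence, and it even yields extra structure (a block decomposition of $R$ and $S$ into $H$-graded Morita equivalent corners). Since the paper only ever applies the proposition with $G=(1/n)\mathbb{Z}$ abelian, the normal case covers the intended use.

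However, the proposition is stated for an arbitrary subgroup $H$ of an arbitrary group $G$, and there your argument has a genuine gap that your closing remark does not repair. Without normality the left $R$-action does not preserve the decomposition of $M$ along left cosets ($R_h M_{th'}\subseteq M_{hth'}$, and $hth'$ need not lie in $tH$), so $M_C$ is not a sub-bimodule; passing to double cosets restores a bimodule decomposition, but a double coset $HtH$ is in general a union of several left cosets, so no single suspension moves $M_{HtH}$ onto $H$ and the single-coset shift step breaks down. The paper's proof avoids all of this machinery: it keeps the same underlying equivalence $\phi$ of ungraded module categories and merely regrades its associated $G$-graded functor into an $H$-graded one, using the zero-extension $\psi:Gr^H$-$R\rightarrow Gr^G$-$R$ (legitimate because $G_R\subseteq H$) and the collapsing functor $\chi$ defined by $\chi(M)_h=\bigoplus_{u\in U}M_{uh}$ for a set $U$ of left coset representatives of $H$ in $G$ (legitimate because $G_S\subseteq H$). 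Verifying that $\chi\phi'\psi$ commutes with the $H$-suspensions and lies over $\phi$ then gives the $H$-graded equivalence via the Gordon--Green characterization of graded equivalences, with no bimodules, no idempotents, and no hypothesis on $H$. So your argument should either be explicitly restricted to normal $H$ (sufficient for this paper) or be replaced by the coset-regrading argument in the general case; what your approach buys in exchange is the finer block-by-block description of the equivalence, which the paper's proof does not provide.
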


\begin{proof} Assume that $R$ and $S$ are graded Morita equivalent as $G$-graded rings. By Theorem 2.3.7 of \cite{H2} (see also Proposition 5.3 and Theorem 5.4 of \cite{BG}), Mod-$R$ is $G$-graded equivalent to Mod-$S$. Thus there exists a functor $\Phi: Mod$-$R\longrightarrow Mod$-$S$, which is $G$-graded and equivalence. So there exists a $G$-graded functor $\Phi': Gr^G$-$R\longrightarrow Gr^G$-$S$ such that the following diagram, where the vertical functors are forgetful
functors, is commutative.
\[ \xymatrix{
  Gr^G\textsc{-}R\ar[d]_F \ar[r]^{\phi'}
& Gr^G\textsc{-}S  \ar[d]^F
\\
  Mod\textsc{-}R \ar[r]_\phi & Mod\textsc{-}S }
\]
Since $\Phi'$ is $G$-graded functor, for each $g\in G$,
$\Phi'\mathcal{T}_g=\mathcal{T}_g\Phi'$, where
$\mathcal{T}_g:Gr^G$-$R\rightarrow Gr^G$-$R$, $M\mapsto M(g)$ is
$g$-suspension functor. Choose a set $U$ of left coset
representatives of $H$ in $G$, and let $\chi:Gr^G$-$S\longrightarrow
Gr^H$-$S$ be defined by $\chi(M)_h=\bigoplus_{u\in
U}M_{uh}$, for each $h\in H$ (since $G_S\subseteq H$,
$\chi(M)\in Gr^H$-$S$). Let
$\phi''=\chi\phi'\psi:Gr^H$-$R\longrightarrow Gr^H$-$S$, where
$\psi:Gr^H$-$R\longrightarrow Gr^G$-$R$ such that for
$M=\bigoplus_{h\in H}M_h\in Gr^H$-$R$, $\psi(M)=\bigoplus_{g\in G}M_g$,
and $M_g=0$ for each $g\in G\setminus H$ (since $G_R\subseteq H$, $\psi(M)\in Gr^G$-$R$). We claim that $\phi''$ is an associated
$H$-graded functor of $\phi$. For each $h_1\in H$ and $M\in
Gr^H$-$R$,
$\phi''\mathcal{T}_{h_1}(M)=\chi\phi'\psi\mathcal{T}_{h_1}(M)$.
Since $\Phi'$ is a $G$-graded functor,
$\phi'\mathcal{T}_{h_1}=\mathcal{T}_{h_1}\phi'$. Thus
$\chi\phi'\psi\mathcal{T}_{h_1}(M)=\chi
\mathcal{T}_{h_1}\phi'\psi(M)=\chi(\bigoplus_{g\in
G}(\phi'\psi(M))_{gh_1})=\bigoplus_{u\in U, g\in
G}(\phi'\psi(M))_{ugh_1}$.
$\mathcal{T}_{h_1}\phi''(M)=\mathcal{T}_{h_1}(\bigoplus_{u\in U,
g\in G}(\phi'\psi(M))_{ug})=\bigoplus_{u\in U, g\in
G}(\phi'\psi(M))_{ugh_1}$. Then
$\phi''\mathcal{T}_{h_1}=\mathcal{T}_{h_1}\phi''$ and so $\phi''$ is
a graded functor. Also the following diagram is commutative.
  \[ \xymatrix{
  Gr^H\textsc{-}R\ar[d]_F \ar[r]^{\phi''}
& Gr^H\textsc{-}S  \ar[d]^F
\\
  Mod\textsc{-}R \ar[r]_\phi & Mod\textsc{-}S }
\]
Thus $\phi''$ is an associated $H$-graded functor of $\phi$ and
hence $R$ and $S$ are graded Morita equivalent as $H$-graded rings.
\end{proof}

\begin{corollary}\label{cor12} Let $Q_1$ and $Q_2$ be finite quivers with no
sinks which are elementary strong shift equivalent via the quiver
$Q_3$. Then the Leavitt path algebras $L_k(Q_1)$ and $L_k(Q_2)$ are
$\mathbb{Z}$-graded Morita equivalent. Also if $Q_1\sim_{S}Q_2$,
then $L_k(Q_1)$ and $L_k(Q_2)$ are $\mathbb{Z}$-graded Morita
equivalent.
\end{corollary}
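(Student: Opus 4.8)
The plan is to deduce this strengthening of Corollary~\ref{cor2} by keeping the same equivalence but descending the grading group from $(1/2)\mathbb{Z}$ to its subgroup $\mathbb{Z}$ via Proposition~\ref{l}. The point of Proposition~\ref{l} is exactly that a $G$-graded Morita equivalence can be converted into an $H$-graded one whenever $H$ is a subgroup of $G$ and the supports of both rings already lie in $H$. So the whole argument should reduce to one bookkeeping check: that the natural $\mathbb{Z}$-grading of $L_k(Q_1)$ and of $L_k(Q_2)$, regarded inside $(1/2)\mathbb{Z}$, has support contained in $\mathbb{Z}$.

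First I would treat the elementary case. By Corollary~\ref{cor2}, $L_k(Q_1)$ and $L_k(Q_2)$ are $(1/2)\mathbb{Z}$-graded Morita equivalent. In that corollary both $Q_1$ and $Q_2$ occur as the small quiver $Q'$ of Theorem~\ref{theresult} (taking $Q=Q_3$ with $Q'^0=Q_1^0$, respectively $Q'^0=Q_2^0$), so each of $L_k(Q_1)$ and $L_k(Q_2)$ carries its natural $\mathbb{Z}$-grading, merely viewed as a $(1/2)\mathbb{Z}$-grading through the inclusion $\mathbb{Z}\hookrightarrow (1/2)\mathbb{Z}$. As recorded in the Remark following Theorem~\ref{theresult}, the support of the small-quiver algebra is contained in (indeed equal to) $\mathbb{Z}$; in particular the half-integer homogeneous components of $L_k(Q_1)$ and $L_k(Q_2)$ vanish, whence $(1/2)\mathbb{Z}_{L_k(Q_1)},\,(1/2)\mathbb{Z}_{L_k(Q_2)}\subseteq \mathbb{Z}$. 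I would then invoke Proposition~\ref{l} with $G=(1/2)\mathbb{Z}$, $H=\mathbb{Z}$, $R=L_k(Q_1)$ and $S=L_k(Q_2)$: the support hypothesis is satisfied and $R,S$ are $G$-graded Morita equivalent, so they are $\mathbb{Z}$-graded Morita equivalent, which is the first assertion.

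For the statement about $\sim_S$, I would fix a chain of elementary strong shift equivalences $Q_1=P_0\sim_{ES}P_1\sim_{ES}\cdots\sim_{ES}P_m=Q_2$ and apply the elementary case to each consecutive pair, producing $\mathbb{Z}$-graded Morita equivalences between $L_k(P_i)$ and $L_k(P_{i+1})$. Since a composite of graded equivalences is again a graded equivalence, transitivity then gives that $L_k(Q_1)$ and $L_k(Q_2)$ are $\mathbb{Z}$-graded Morita equivalent. The one point requiring attention is that the elementary case needs each intermediate $P_i$ to be a finite quiver without sinks, so I would either arrange the chain to run through finite sink-free quivers or invoke the standard fact that strong shift equivalence of finite sink-free quivers can be realized by such a chain; granting this, the conclusion follows. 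I expect this support computation, and not any new categorical input, to be the only genuine obstacle, since the substantive work has already been packaged into Proposition~\ref{l} and Corollary~\ref{cor2}.
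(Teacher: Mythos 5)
Your proposal is correct and follows essentially the same route as the paper: invoke Corollary \ref{cor2} for the $(1/2)\mathbb{Z}$-graded equivalence, note that the supports of $L_k(Q_1)$ and $L_k(Q_2)$ lie in $\mathbb{Z}$, and descend via Proposition \ref{l} with $G=(1/2)\mathbb{Z}$, $H=\mathbb{Z}$, handling $\sim_S$ by composing along a chain of elementary equivalences. Your extra remarks on the support computation and on the intermediate quivers in the chain are reasonable elaborations of what the paper leaves implicit.
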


\begin{proof} By Corollary \ref{cor2}, $L_k(Q_1)$ and $L_k(Q_2)$ are
$(1/2)\mathbb{Z}$-graded Morita equivalent. But the supports of $L_k(Q_1)$ and $L_k(Q_2)$ are equal to $\mathbb{Z}$ and so the result follows by Proposition \ref{l}.
\end{proof}

By using of the Corollary \ref{cor12} we have the following
corollary which shows that, the main result of \cite{PS},
Proposition 15.(2) and Corollary 24 of \cite{H1} are corollaries of
Theorem \ref{theresult}.

\begin{corollary}\label{cor3}
i) Let $L$ and $R$ be $\mathbb{N}$-valued matrices such that $LR$ and $RL$
make sense. Let $Q^{LR}$ be the quiver with incidence matrix $LR$ and $Q^{RL}$ the
quiver with incidence matrix $RL$. Then $L_k(Q^{LR})$ and $L_k(Q^{RL})$ are graded Morita equivalent.\\
ii) If the incidence matrices of $Q$ and $Q'$ are strong shift equivalent, then $L_k(Q)$ and $L_k(Q')$ are graded Morita equivalent.
\end{corollary}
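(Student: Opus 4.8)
The plan is to reduce both parts to Corollary \ref{cor12}, which already settles the case of a single elementary strong shift equivalence; the corollary is then a matter of translating the matrix hypotheses into quiver language and, for part (ii), concatenating.

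For part (i), I would first observe that the hypothesis ``$L,R$ are $\mathbb{N}$-valued, $LR$ and $RL$ make sense'' is \emph{verbatim} the definition of the matrices $LR$ and $RL$ being elementary strong shift equivalent. By the fact recorded in Definition \ref{def1} (a finite quiver's elementary strong shift equivalence is equivalent to that of its incidence matrix, with witnessing matrix $B_{Q_3}=\left(\begin{array}{cc} 0 & L \\ R & 0 \end{array}\right)$), the quivers $Q^{LR}$ and $Q^{RL}$ are elementary strong shift equivalent via the quiver $Q_3$ whose incidence matrix is $\left(\begin{array}{cc} 0 & L \\ R & 0 \end{array}\right)$. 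Corollary \ref{cor12} then immediately gives that $L_k(Q^{LR})$ and $L_k(Q^{RL})$ are $\mathbb{Z}$-graded Morita equivalent, which is the desired conclusion.

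For part (ii), I would unwind the definition: strong shift equivalence of the incidence matrices of $Q$ and $Q'$ means, by Definition \ref{def1}, that there is a finite chain $Q = Q_{(0)} \sim_{ES} Q_{(1)} \sim_{ES} \cdots \sim_{ES} Q_{(t)} = Q'$ of elementary strong shift equivalences. Applying part (i) (equivalently, Corollary \ref{cor12}) to each consecutive pair shows that $L_k(Q_{(i)})$ and $L_k(Q_{(i+1)})$ are $\mathbb{Z}$-graded Morita equivalent for every $i$. Since graded Morita equivalence is an equivalence relation---in particular transitive, because the composite of two graded functors is graded and the composite of two equivalences is an equivalence---I can concatenate the chain to conclude that $L_k(Q)$ and $L_k(Q')$ are $\mathbb{Z}$-graded Morita equivalent.

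The only point requiring genuine care is the no-sinks hypothesis of Corollary \ref{cor12}, which must hold for each quiver entering the argument, namely $Q^{LR}$, $Q^{RL}$, and all intermediate $Q_{(i)}$ in the chain; concretely this amounts to every incidence matrix in the chain having no zero row. I expect this to be the main (if minor) obstacle: one must ensure that no sinks are created by passing from $A=LR$ to $B=RL$, and more generally along the chain. Under the standing assumption---standard in this symbolic-dynamics context---that all quivers under consideration have no sinks, this is automatic, and the proof is complete.
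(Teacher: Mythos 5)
Your proposal is correct and matches the paper's route exactly: the paper offers no written proof for this corollary, merely noting that it follows "by using Corollary \ref{cor12}", and your argument supplies precisely the intended details (the definitional translation from matrices to quivers via $B_{Q_3}=\left(\begin{smallmatrix}0&L\\R&0\end{smallmatrix}\right)$, plus transitivity along the chain, which is in fact already built into the $\sim_S$ clause of Corollary \ref{cor12}). Your remark about the no-sinks hypothesis is a point of care the paper itself glosses over, and your resolution of it is the standard one.
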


A shift space $(X, \alpha)$ over a finite alphabet $A$ is a compact subset $X$ of $A^{\mathbb{Z}}$
that is stable under the shift map $\alpha$ defined by $\alpha(f)(n) = f(n + 1)$. Let $Q$ be a finite quiver with no sources and sinks. The shift space $X_Q$, whose alphabet is the set of arrows in $Q$ is called the edge shift of $Q$ \cite[Definition 2.2.5]{LM}. Edge shifts are subshifts of
finite type. Also one may associate a subshift of finite type $X_M$ to every square $0-1$
matrix $M$ having non-zero rows or columns \cite[Definition 2.3.7]{LM}. The alphabet
is the set of vertices for the quiver whose incidence matrix is
$M$. Shift spaces $(X, \alpha_X)$ and $(Y, \alpha_Y )$ are conjugate (equivalence), or topologically conjugate, if there is a homeomorphism $\varphi: X\rightarrow Y$ such that $\alpha_Y\varphi =\varphi\alpha_X$. Williams's Theorem explains the importance of strong shift
equivalence:\\
\begin{theorem}(Williams's Theorem \cite[Theorem A]{W}) Let $M$ and $N$ be square $\mathbb{N}$-valued
matrices and $X_M$ and $X_N$ the associated subshifts of finite type. Then $X_M\cong X_N$ if and only if $M$ and $N$ are strong shift equivalent.
\end{theorem}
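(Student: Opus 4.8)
The plan is to prove the two implications separately, working throughout with the edge shifts $X_M$ and $X_N$ of the directed graphs $G_M$ and $G_N$ whose adjacency matrices are $M$ and $N$. The connecting device, developed in \cite{LM}, is the calculus of state splittings and amalgamations: each such elementary graph operation induces a topological conjugacy of the associated edge shifts, while on the level of adjacency matrices it realizes an elementary strong shift equivalence. Matching these two descriptions of the elementary moves against one another is what makes the equivalence go through in both directions.

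For the implication that strong shift equivalence implies conjugacy, I would first reduce, using transitivity of topological conjugacy, to the case of a single elementary strong shift equivalence, say $M = CD$ and $N = DC$ with $C$ and $D$ nonnegative integer matrices. Viewing $C$ as the edges of a bipartite graph from the $M$-side vertices $I$ to the $N$-side vertices $J$ and $D$ as the edges from $J$ back to $I$, the entry $M_{ii'}=\sum_j C_{ij}D_{ji'}$ identifies each edge of $G_M$ from $i$ to $i'$ with a composable pair consisting of a $C$-edge $i\to j$ and a $D$-edge $j\to i'$. Unfolding a point of $X_M$ in this way yields a bi-infinite alternating sequence of $C$- and $D$-edges, that is, a point of the edge shift of the bipartite graph with adjacency matrix $\left(\begin{array}{cc} 0 & C \\ D & 0 \end{array}\right)$; regrouping the same alternating sequence into composable $D$-then-$C$ pairs (equivalently, shifting the grouping by one edge) produces a point of $X_N$, since $N_{jj'}=\sum_i D_{ji}C_{ij'}$. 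This regrouping map and its inverse are both sliding block codes, so it is a topological conjugacy $X_M\cong X_N$; composing along a chain of elementary equivalences then disposes of strong shift equivalence in general.

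The reverse implication, that conjugacy implies strong shift equivalence, is the substantial direction and rests on the Decomposition Theorem of \cite{LM}: every topological conjugacy between two edge shifts factors as a finite composition of out-splitting codes, in-splitting codes, and their inverse amalgamation codes. Granting this, I would run the factorization on the matrix side: each out-splitting (or in-splitting) of a graph $G$ to a graph $H$ changes the adjacency matrix by an elementary strong shift equivalence $A_G=CD$, $A_H=DC$, and amalgamations, being the inverses, do likewise. Stringing together the elementary strong shift equivalences coming from the successive factors of the given conjugacy exhibits a chain from $M$ to $N$, so that $M$ and $N$ are strong shift equivalent. The main obstacle is precisely the Decomposition Theorem itself: reducing an arbitrary sliding block conjugacy to elementary state splittings requires first recoding it through higher edge (higher block) presentations so that it becomes a graph homomorphism, and then analyzing how such a recoded map refines the graph by successive elementary splittings — this combinatorial reduction, rather than the matrix bookkeeping, is where the real work lies.
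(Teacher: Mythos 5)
The paper does not prove this statement at all: Williams's Theorem is quoted verbatim from \cite{W} (see also \cite[Theorem 7.2.7]{LM}) and used as a black box to deduce Corollary \ref{cor4}, so there is no in-paper argument to compare yours against. Judged on its own, your outline is the standard proof from the symbolic dynamics literature. The forward direction is essentially complete and correct: for a single elementary equivalence $M=CD$, $N=DC$, the bipartite graph with adjacency matrix $\left(\begin{smallmatrix}0&C\\D&0\end{smallmatrix}\right)$ and the two regroupings of an alternating $C$-$D$ edge string into $CD$-pairs versus $DC$-pairs do give mutually inverse sliding block codes between $X_M$ and $X_N$ (one should fix, once and for all, bijections between the edges of $G_M$ and the composable $C$-$D$ pairs, and likewise for $G_N$, but any choice works), and transitivity handles chains. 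The reverse direction, however, is not proved but only reduced to the Decomposition Theorem (\cite[Theorem 7.1.2]{LM}), which you correctly identify as carrying all the real content; as written, your argument for that direction is a roadmap, not a proof, since recoding an arbitrary conjugacy through higher block presentations and factoring it into elementary splittings and amalgamations is precisely the substance of Williams's theorem. You should also record explicitly the lemma that each in/out-splitting or amalgamation changes the adjacency matrix by an elementary strong shift equivalence (\cite[Theorem 7.2.3]{LM}), which you use but do not verify. So: right architecture, easy half done, hard half deferred to an unproved (though citable) theorem.
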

By using Williams's
Theorem and Corollary \ref{cor3} we have the following corollary.

\begin{corollary}\label{cor4} Let $(X, \alpha)$ and $(X, \beta)$ be subshifts of finite type, $Q$
and $Q'$ be quivers such that $(X, \alpha)=X_Q$ and $(X,
\beta)=X_{Q'}$. If $(X, \alpha)$ and $(X, \beta)$ are conjugate,
then $L_k(Q)$ and $L_k(Q')$ are $\mathbb{Z}$-graded Morita
equivalent.
\end{corollary}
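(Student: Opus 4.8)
The plan is to compose Williams's Theorem with Corollary \ref{cor3}, translating conjugacy of the two edge shifts first into strong shift equivalence of the associated incidence matrices and then into $\mathbb{Z}$-graded Morita equivalence of the Leavitt path algebras.

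First I would fix notation and check the hypotheses. Write $B_Q$ and $B_{Q'}$ for the incidence matrices of $Q$ and $Q'$. By definition $X_Q$ is the edge shift of $Q$, which is precisely the subshift of finite type $X_{B_Q}$ determined by the $\mathbb{N}$-valued matrix $B_Q$; likewise $(X,\beta) = X_{Q'} = X_{B_{Q'}}$. Since $Q$ and $Q'$ are quivers whose edge shifts are defined, they are finite and have no sources and no sinks, so the ``no sinks'' hypothesis required by the later corollaries holds automatically.

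Next I would apply Williams's Theorem to the conjugacy $X_{B_Q} \cong X_{B_{Q'}}$ supplied by the assumption that $(X,\alpha)$ and $(X,\beta)$ are conjugate; this gives that $B_Q$ and $B_{Q'}$ are strong shift equivalent. Finally I would invoke Corollary \ref{cor3}(ii): strong shift equivalence of the incidence matrices of $Q$ and $Q'$ yields that $L_k(Q)$ and $L_k(Q')$ are $\mathbb{Z}$-graded Morita equivalent, which is the assertion.

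I expect no genuine obstacle here, since the corollary is exactly the composite of the dictionary already assembled in the paper (Theorem \ref{theresult} $\Rightarrow$ Corollary \ref{cor12} $\Rightarrow$ Corollary \ref{cor3}) with Williams's Theorem. The only point deserving attention is the bookkeeping that the edge-shift hypothesis forces $Q$ and $Q'$ to have no sinks, which is precisely what upgrades the conclusion from $(1/2)\mathbb{Z}$-graded to $\mathbb{Z}$-graded Morita equivalence, via the support computation and Proposition \ref{l} used in Corollary \ref{cor12}.
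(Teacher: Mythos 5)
Your proposal matches the paper's own argument exactly: the paper derives this corollary precisely by combining Williams's Theorem (conjugacy of the edge shifts implies strong shift equivalence of the incidence matrices) with Corollary \ref{cor3}(ii). Your additional remark that the edge-shift hypothesis forces $Q$ and $Q'$ to be finite without sinks, so that Corollary \ref{cor3} applies and the equivalence is genuinely $\mathbb{Z}$-graded, is a correct and welcome piece of bookkeeping that the paper leaves implicit.
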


\subsection{Splitting}

The notion of (in, out)-splitting, introduced by D. Lind and B. Marcus in the context of symbolic dynamics \cite{LM}. This notion in the contexts of graph $C^*$-algebras and Leavitt path algebras was first investigated in \cite{BP} and \cite{ALPS}.

 \begin{definition}(\cite[Section 5]{BP}, \cite[Definition 1.9]{ALPS})  Let $Q=(Q^0, Q^1, r, s)$ be a quiver. For each $v \in Q^0$ with $r^{-1}(v)\neq \emptyset$, partition the set $r^{-1}(v)$ into disjoint nonempty subsets $\xi^v_1, \cdots, \xi^v_{m(v)}$ where $m(v)\geq1$ (If $v$ is a source then we
put $m(v) = 0$). Let $\mathcal{P}$ denote the resulting partition of $Q^1$. We form the in-split quiver $Q_r(\mathcal{P})$ from $Q$
using the partition $\mathcal{P}$ as follows:

$$Q_r(\mathcal{P})^0 = \{v_i| v\in Q^0, 1\leq i\leq m(v)\} \cup \{v | m(v)=0\},$$
$$Q_r(\mathcal{P})^1 = \{e_j| e\in Q^1, 1\leq j\leq m(s(e))\} \cup \{e | m(s(e))=0\},$$

and define $r_{Q_r(\mathcal{P})}, s_{Q_r(\mathcal{P})}: Q_r(\mathcal{P})^1\rightarrow Q_r(\mathcal{P})^0$ by

  $s_{Q_r(\mathcal{P})}(e_j)=s(e)_ j$ and $s_{Q_r(\mathcal{P})}(e)=s(e)$,

$r_{Q_r(\mathcal{P})}(e_j)= r(e)_i$ and $r_{Q_r(\mathcal{P})}(e)=r(e)_i$ where $e\in \xi^{r(e)}_i$. Conversely, if $Q$ and $Q'$ are quivers, and there exists a partition $\mathcal{P}$ of $Q^1$ for which $Q_r(\mathcal{P})=Q'$, then $Q$ is called an in-amalgamation of $Q'$.
\end{definition}

\begin{definition}(\cite[Section 3]{BP}, \cite[Definition 1.12]{ALPS})  Let $Q=(Q^0, Q^1, r, s)$ be a quiver. For each $v \in Q^0$ with $s^{-1}(v)\neq \emptyset$, partition the set $s^{-1}(v)$ into disjoint nonempty subsets $\xi^1_v, \cdots, \xi^{m(v)}_v$ where $m(v)\geq1$ (If $v$ is a sink then we
put $m(v) = 0$). Let $\mathcal{P}$ denote the resulting partition of $Q^1$. We form the out-split quiver $Q_s(\mathcal{P})$ from $Q$
using the partition $\mathcal{P}$ as follows:

$$Q_s(\mathcal{P})^0 = \{v^i| v\in Q^0, 1\leq i\leq m(v)\} \cup \{v | m(v)=0\},$$
$$Q_s(\mathcal{P})^1 = \{e^j| e\in Q^1, 1\leq j\leq m(r(e))\} \cup \{e | m(r(e))=0\},$$

and define $r_{Q_s(\mathcal{P})}, s_{Q_s(\mathcal{P})}: Q_s(\mathcal{P})^1\rightarrow Q_s(\mathcal{P})^0$ for each $e\in \xi^i_{s(e)}$ by

  $s_{Q_s(\mathcal{P})}(e^j)=s(e)^i$ and $s_{Q_s(\mathcal{P})}(e)=s(e)^i$,

$r_{Q_s(\mathcal{P})}(e^j)= r(e)^j$ and $r_{Q_s(\mathcal{P})}(e)=r(e)$. Conversely, if $Q$ and $Q'$ are quivers, and there exists a partition $\mathcal{P}$ of $Q^1$ for which $Q_s(\mathcal{P})=Q'$, then $Q$ is called an out-amalgamation of $Q'$.
\end{definition}

The following corollary shows that Proposition 15.(1) of \cite{H1} and Corollary 4.1 of \cite{PS} are corollaries of Theorem \ref{theresult}.

\begin{corollary}\label{cor5} Let $Q$ be a finite quiver with no sinks, $\mathcal{P}$ be a partition of $Q^1$, $Q_r(\mathcal{P})$ the in-split quiver from $Q$ using $\mathcal{P}$ and $Q_s(\mathcal{P})$ the out-split quiver from $Q$ using $\mathcal{P}$. Then $L_k(Q)$, $L_k(Q_r(\mathcal{P}))$ and $L_k(Q_s(\mathcal{P}))$ are $\mathbb{Z}$-graded  Morita equivalent.
\end{corollary}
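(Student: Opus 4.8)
The plan is to mimic the proof of Corollary \ref{cor2}: for each of the two splittings I will exhibit a single intermediate quiver which collapses, under the Crisp and Gow operation of Theorem \ref{theresult}, onto $Q$ for one choice of vertex subset and onto the split quiver for another, so that composing the two resulting graded Morita equivalences (and then enlarging the grading group via Proposition \ref{l}) yields the claim.

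Consider first the in-split. I would introduce the quiver $\widetilde{Q}$ with vertex set $\widetilde{Q}^0 = Q^0 \,\sqcup\, \{v_i : v\in Q^0,\ 1\le i\le m(v)\}$ and two families of edges: a projection edge $f^v_i\colon v_i\to v$ for every $v$ and every $1\le i\le m(v)$, and, for every $e\in Q^1$ with $e\in\xi^{r(e)}_{b}$, a lifted edge $g_e\colon s(e)\to r(e)_{b}$. By construction every edge of $\widetilde{Q}$ runs between $Q^0$ and $\{v_i\}$, so $\widetilde{Q}$ is bipartite; hence every cycle meets both parts, and since $Q$ has no sinks every vertex of $\widetilde{Q}$ emits an edge, so $\widetilde{Q}$ has no sinks. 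Thus the subsets $S_1=Q^0$ and $S_2=\{v_i\}\cup\{v: m(v)=0\}$ both satisfy the hypotheses of Theorem \ref{theresult}. The heart of the argument is then a direct check of the two contractions: collapsing $\widetilde{Q}$ along $S_1$ turns each length-two path $s(e)\xrightarrow{g_e} r(e)_{b}\xrightarrow{f^{r(e)}_{b}} r(e)$ into a single edge $s(e)\to r(e)$, recovering $Q$; collapsing along $S_2$ turns each path $s(e)_a\xrightarrow{f^{s(e)}_a} s(e)\xrightarrow{g_e} r(e)_{b}$ into an edge $s(e)_a\to r(e)_b$, which is exactly the edge $e_a$ of $Q_r(\mathcal{P})$, while the length-one paths out of a source give the remaining unsplit edges. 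In both cases the maximal relevant path length is $n=2$, so Theorem \ref{theresult} yields $(1/2)\mathbb{Z}$-graded Morita equivalences $L_k(Q)\approx L_k(\widetilde{Q})$ and $L_k(Q_r(\mathcal{P}))\approx L_k(\widetilde{Q})$.

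For the out-split I would run the symmetric construction: the intermediate quiver $\overline{Q}$ has vertex set $Q^0\sqcup\{v^i : v\in Q^0,\ 1\le i\le m(v)\}$, coprojection edges $h^v_i\colon v\to v^i$, and, for each $e\in\xi^{i}_{s(e)}$, a lifted edge $g'_e\colon s(e)^{i}\to r(e)$. It is again bipartite and sink-free, contracting along $Q^0$ returns $Q$, and contracting along $\{v^i\}\cup\{v:m(v)=0\}$ returns $Q_s(\mathcal{P})$; alternatively this case reduces to the in-split one applied to $Q^{op}$. Theorem \ref{theresult} then again produces $(1/2)\mathbb{Z}$-graded Morita equivalences linking $L_k(Q)$ and $L_k(Q_s(\mathcal{P}))$ through $L_k(\overline{Q})$.

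Finally I would compose these equivalences to conclude that $L_k(Q)$, $L_k(Q_r(\mathcal{P}))$ and $L_k(Q_s(\mathcal{P}))$ are pairwise $(1/2)\mathbb{Z}$-graded Morita equivalent, and then pass to the $\mathbb{Z}$-grading exactly as in Corollary \ref{cor12}: since $Q$ has no sinks, each of these Leavitt path algebras admits paths of every length and so has support equal to $\mathbb{Z}$ in its natural grading, whence Proposition \ref{l} (taking $H=\mathbb{Z}$ inside $G=(1/2)\mathbb{Z}$) upgrades each $(1/2)\mathbb{Z}$-graded equivalence to a $\mathbb{Z}$-graded one. I expect the only real difficulty to be combinatorial: choosing the intermediate quivers and matching their contracted edges to the split-quiver edges $e_i$ with the correct indices, together with the bookkeeping for sources (where $m(v)=0$); the homological and grading input is entirely furnished by Theorem \ref{theresult} and Proposition \ref{l}.
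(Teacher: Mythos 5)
Your proposal is correct and follows essentially the same route as the paper: the paper simply cites \cite[Propositions 6.2 and 6.3]{BP} for the fact that $Q\sim_{ES}Q_r(\mathcal{P})$ and $Q\sim_{ES}Q_s(\mathcal{P})$ (via exactly the bipartite intermediate quivers you construct by hand) and then invokes Corollary \ref{cor12}, whose proof is precisely your combination of Theorem \ref{theresult} applied to the two vertex subsets followed by the support argument and Proposition \ref{l}. The only difference is that you inline the Bates--Pask construction and verification rather than citing it.
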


\begin{proof}
By \cite[Proposition 6.3]{BP} $Q\sim_{ES}Q_r(\mathcal{P})$ and by \cite[Proposition 6.2]{BP} $Q\sim_{ES}Q_s(\mathcal{P})$. Then the result follows by Corollary \ref{cor12}.
\end{proof}

\begin{definition}(\cite[Definition 1.2]{ALPS})
Let $Q=(Q^0, Q^1, r, s)$ be a quiver with at least two vertices, and let $v\in Q^0$ be
a source. We form the source elimination quiver $Q_{\backslash v}$ of $Q$ as follows:
$$Q^0_{\backslash v} = Q^0\backslash\{v\},$$
$$Q^1_{\backslash v} = Q^1\backslash s^{-1}(v),$$
$$s_{Q_{\backslash v}} = s|_{Q^1_{\backslash v}},$$
$$r_{Q_{\backslash v}} = r|_{Q^1_{\backslash v}}.$$
\end{definition}

Hazrat in \cite[Proposition 13]{H1} proved that, if $Q$ is a finite quiver with no sinks and $v\in Q^0$ is a source, then $L_k(Q)$ and $L_k(Q_{\backslash v})$ are graded Morita equivalent. In the following corollary, we generalize this proposition.

\begin{corollary}\label{cor7} Let $Q$ be a quiver with finite number of vertices and $v\in Q^0$ be a source which is not infinite emitter or sink. Then $L_k(Q_{\backslash v})$ and $L_k(Q)$ are $\mathbb{Z}$-graded Morita equivalent.
\end{corollary}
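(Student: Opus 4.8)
The plan is to apply Theorem \ref{theresult} with the choice $Q'^0 = Q^0 \setminus \{v\}$ and to check two things: that the quiver $Q'$ produced by the theorem is precisely the source elimination quiver $Q_{\backslash v}$, and that the associated integer $n$ equals $1$. First I would verify the hypotheses of Theorem \ref{theresult}. Since $v$ is a source that is neither a sink nor an infinite emitter, we have $0 < |s^{-1}(v)| < \infty$, so $v$ is a regular vertex; in particular $v \notin Q^0_{\text{sing}}$, whence $Q^0_{\text{sing}} \subseteq Q^0 \setminus \{v\} = Q'^0$. Moreover, because $v$ is a source we have $r^{-1}(v) = \emptyset$, so $v$ lies on no closed path. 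Consequently every cycle $\mu$ in $Q$ satisfies $\mu^0 \subseteq Q'^0$, and in particular $Q'^0 \cap \mu^0 \neq \emptyset$. Together with the standing assumption that $Q^0$ is finite, this confirms that all hypotheses of Theorem \ref{theresult} are met.

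Next I would identify $Q'$ explicitly. By definition its edges are indexed by paths $\mu \in \text{Path}(Q) \setminus Q^0$ with $s(\mu), r(\mu) \in Q'^0$ and $r(\mu_i) \in Q^0 \setminus Q'^0 = \{v\}$ for all $1 \leq i < l(\mu)$. The key observation is that if $l(\mu) \geq 2$, then the condition $r(\mu_1) = v$ would require an edge terminating at $v$, contradicting $r^{-1}(v) = \emptyset$; hence no intermediate index can occur and $l(\mu) = 1$. Therefore the qualifying paths are exactly the edges $\alpha \in Q^1$ with $s(\alpha), r(\alpha) \neq v$. Since $v$ is a source, the constraint $r(\alpha) \neq v$ is automatic, so these are precisely the edges not emitted by $v$, i.e. the elements of $Q^1 \setminus s^{-1}(v) = Q^1_{\backslash v}$. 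Matching the source and range maps $s(e_\alpha) = s(\alpha)$, $r(e_\alpha) = r(\alpha)$ with the restricted maps on $Q_{\backslash v}$ shows that $Q'$ coincides with $Q_{\backslash v}$, and since every qualifying path has length one, $n = \max\{l(\mu)\} = 1$.

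Finally, Theorem \ref{theresult} yields that $L_k(Q')$ and $L_k(Q)$ are $(1/n)\mathbb{Z} = \mathbb{Z}$-graded Morita equivalent, and since $Q' = Q_{\backslash v}$ this is exactly the assertion. The only delicate point is the bookkeeping that identifies the edge set and the source/range maps of the theorem's output $Q'$ with those of $Q_{\backslash v}$; once the length-one reduction above is in place this identification is immediate, so I expect no genuine obstacle beyond this routine verification.
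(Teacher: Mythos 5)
Your proposal is correct and follows exactly the paper's route: the paper's own proof is the one-line "Put $Q'^0=Q^0\backslash\{v\}$, then the result follows by Theorem \ref{theresult}," and your argument simply spells out the verifications (regularity of $v$, absence of $v$ from cycles, the length-one reduction forcing $n=1$ and $Q'=Q_{\backslash v}$) that the paper leaves implicit.
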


\begin{proof}
Put $Q'^0=Q^0\backslash \{v\}$, then the result follows by Theorem \ref{theresult}.
\end{proof}

Let $\Lambda=k(Q,\rho)$ be a finite dimensional path algebra over a field $k$ of the finite quiver $(Q,\rho)$ with relations. Let $v$ be a source in $Q$ and $\bar{e_v}$ the corresponding idempotent in $\Lambda$. We have $\bar{e_v}\Lambda \bar{e_v}\simeq k$ and $\bar{e_v}\Lambda(1-\bar{e_v})=0$. If $(Q',\rho')$ denotes the quiver with relations we obtain by removing the vertex $v$ and the relations starting at $v$, then $(1-\bar{e_v})\Lambda(1-\bar{e_v})\simeq k(Q',\rho')$. Thus $k(Q,\rho)$ is obtained from $\Lambda'=k(Q',\rho')$ by adding one vertex $v$, together with arrows and relations starting at $v$. Then we have $\Lambda\simeq\left(\begin{array}{cc}
k&0\\
(1-\bar{e_v})\Lambda \bar{e_v}&\Lambda'\\
\end{array}\right)$. $\Lambda$ is called one-point extension of $\Lambda'$ \cite[Page 71]{Au}. \\

By Proposition \ref{pro} and Corollary \ref{cor7} we have the following corollary.

\begin{corollary}\label{cor17} Let $\Lambda$ be a finite dimensional algebra which is one-point extension of $\Lambda'$. Then the algebras $\Lambda/J^2$ and $\Lambda'/J^2$ are singularly equivalent.
\end{corollary}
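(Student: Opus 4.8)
The plan is to reduce the statement to a purely combinatorial comparison of the underlying quivers and then invoke the two results named in the hint. First I would record that passing to $\Lambda/J^2$ erases all the relations. Writing $\Lambda=kQ/I$ with $I=\langle\rho\rangle$ admissible (so $I\subseteq J^2$, where $J$ is the arrow ideal of $kQ$), the radical of $\Lambda$ is $J/I$ and its square is $(J/I)^2=(J^2+I)/I=J^2/I$; hence $\Lambda/J^2\cong(kQ/I)/(J^2/I)\cong kQ/J^2$, and likewise $\Lambda'/J^2\cong kQ'/J^2$. Thus the two radical-square-zero algebras appearing in the statement are exactly $kQ/J^2$ and $kQ'/J^2$, and the relations $\rho,\rho'$ play no role whatsoever.

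Second, I would identify $Q'$ with the source elimination quiver $Q_{\backslash v}$. By the description preceding the corollary, $\Lambda$ is built from $\Lambda'$ by adjoining a single source $v$ together with the arrows emanating from it; equivalently, $(Q',\rho')$ is obtained from $(Q,\rho)$ by deleting the vertex $v$ and the relations starting at $v$. On the level of quivers this means $Q'^0=Q^0\backslash\{v\}$ and $Q'^1=Q^1\backslash s^{-1}(v)$, which is precisely the definition of $Q_{\backslash v}$. Since $Q$ is finite, $v$ is not an infinite emitter, and in the nondegenerate case where $v$ actually emits at least one arrow it is not a sink either, so the hypotheses of Corollary \ref{cor7} are satisfied. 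Corollary \ref{cor7} then gives that $L_k(Q')=L_k(Q_{\backslash v})$ and $L_k(Q)$ are $\mathbb{Z}$-graded Morita equivalent. Finally, applying the implication $(2)\Rightarrow(1)$ of Proposition \ref{pro} to the pair $Q,Q'$ yields that $kQ/J^2$ and $kQ'/J^2$ — that is, $\Lambda/J^2$ and $\Lambda'/J^2$ — are singularly equivalent, as desired.

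The only point requiring genuine care is the standing hypothesis of both Corollary \ref{cor7} and Proposition \ref{pro} that the quivers have \emph{no sinks}; this is where I expect the main friction, since a general one-point extension need not be sink-free. Fortunately the obstruction is mild: deleting a source neither creates nor destroys sinks among the remaining vertices (its removal only strips incoming edges from vertices of $Q'$, never emptying their out-neighbourhoods), and $v$ itself is a source rather than a sink. Hence $Q$ is sink-free if and only if $Q'=Q_{\backslash v}$ is, and the no-sinks condition propagates correctly along the construction. I would therefore assume (consistently with the rest of the paper) that the quiver of $\Lambda$ has no sinks; granting this, all hypotheses of Corollary \ref{cor7} and Proposition \ref{pro} are in force and the two-step chain ``graded Morita equivalence of Leavitt path algebras $\Rightarrow$ singular equivalence'' closes the argument.
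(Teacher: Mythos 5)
Your proposal is correct and follows exactly the route the paper intends: the paper's entire justification for this corollary is the single sentence ``By Proposition \ref{pro} and Corollary \ref{cor7} we have the following corollary,'' and your argument simply fills in the details of that chain (identifying $\Lambda/J^2\cong kQ/J^2$, recognising $Q'$ as the source elimination quiver $Q_{\backslash v}$, applying Corollary \ref{cor7} and then the implication $(2)\Rightarrow(1)$ of Proposition \ref{pro}). If anything, you are more careful than the paper, which silently ignores both the no-sinks hypothesis required by Proposition \ref{pro} and the degenerate case where $v$ emits no arrows; your observation that source deletion neither creates nor destroys sinks correctly isolates where an extra standing assumption is needed.
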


\begin{definition}(\cite[Definition 1.6]{ALPS})
Let $Q=(Q^0, Q^1, r, s)$ be a quiver and $v\in Q^0$. Let $v^*$ and $f$ be symbols not in $Q^0\cup Q^1$. We form the expansion quiver $Q_v$ from $Q$ at $v$ as follows:

$$Q^0_v= Q^0\cup \{v^*\},$$
$$Q^1_v= Q^1\cup \{f\},$$
$$s_{Q_v}(e) =\left\lbrace
\begin{array}{c l l}
v & if\,\, e=f,\\
v^*  & if \,\, s_Q(e)=v,\\
s_Q(e) & \,\, otherwise,
\end{array}
  \right.$$

$$r_{Q_v}(e) =\left\lbrace
\begin{array}{c l }
v^* & if\,\, e=f,\\
r_Q(e) & \,\, otherwise.
\end{array}
  \right.$$
Conversely, if $Q$ and $Q'$ are quivers, and there exists a vertex $v$ of $Q$ for which $Q_v=Q'$, then $Q$ is called a contraction of $Q'$.
\end{definition}

G. Abrams et al. in \cite[Corollary 3.4]{ALPS} proved that, if $k$ is an infinite field, $Q$ is a row-finite quiver and $v\in Q^0$, then  $L_k(Q)$ and $L_k(Q_{v})$ are Morita equivalent.

\begin{corollary}\label{cor8} Let $Q$ be a quiver with finite number of vertices and $v\in Q^0$ which is not infinite emitter or sink. Then $L_k(Q_v)$ and $L_k(Q)$ are $(1/2)\mathbb{Z}$-graded Morita equivalent.
\end{corollary}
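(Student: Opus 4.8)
The plan is to deduce this directly from Theorem \ref{theresult}, applied with the theorem's ambient quiver taken to be the expansion $Q_v$ and its distinguished vertex subset taken to be the original vertex set $Q^0$. Thus I would run Theorem \ref{theresult} with its $Q$ equal to $Q_v$ and its $Q'^0$ equal to $Q^0\subseteq (Q_v)^0=Q^0\cup\{v^*\}$, so that the sole excluded vertex is the newly inserted $v^*$.

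First I would verify the hypotheses. Finiteness of $(Q_v)^0$ is immediate. For $(Q_v)^0_{\text{sing}}\subseteq Q^0$ it is enough to check that $v^*$ is a regular vertex of $Q_v$: by the definition of the expansion the edges emitted by $v^*$ are exactly the edges emitted by $v$ in $Q$, and since $v$ is neither a sink nor an infinite emitter this set is finite and nonempty, whence $v^*\notin (Q_v)^0_{\text{sing}}$. For the cycle condition, I would note that $f$ is the unique edge of $Q_v$ with range $v^*$ and that $s(f)=v\in Q^0$; consequently there is no loop at $v^*$, and any cycle of $Q_v$ passing through $v^*$ also passes through $v\in Q^0$, while any cycle avoiding $v^*$ lies entirely in $Q^0$.

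Next I would identify the quiver $Q'$ that Theorem \ref{theresult} constructs on the vertex set $Q^0$. Its edges are indexed by the paths $\mu$ of $Q_v$ with $s(\mu),r(\mu)\in Q^0$ all of whose interior vertices equal $v^*$. Since $f$ is the only edge into $v^*$ and every edge out of $v^*$ has its range in $Q^0$, such a path can meet $v^*$ at most once; hence it is either a single edge $\alpha$ both of whose endpoints lie in $Q^0$ (equivalently, an edge of $Q$ not emitted by $v$) or a length-two path $fe$, where $e$ ranges over the edges out of $v^*$, i.e. the out-edges of $v$ in $Q$. In the first case the corresponding edge of $Q'$ keeps the source and range of $\alpha$; in the second it runs from $v=s(f)$ to $r_Q(e)$, reproducing an out-edge of $v$ in $Q$. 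This yields a source- and range-preserving bijection between the edges of $Q'$ and those of $Q$, so $Q'\cong Q$; it also shows that the maximal length of a qualifying path is $n=2$.

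Finally, Theorem \ref{theresult} then gives that $L_k(Q')$ and $L_k(Q_v)$ are $(1/2)\mathbb Z$-graded Morita equivalent, and the identification $Q'\cong Q$ turns this into the desired statement. The only step demanding any care is this combinatorial identification---confirming that the qualifying paths of $Q_v$ are precisely the length-one and length-two paths listed and that the induced edge map is a bijection---but there is no genuine obstacle here, since the expansion is constructed exactly so that collapsing the inserted vertex $v^*$ reverses it.
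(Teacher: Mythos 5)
Your proposal is correct and follows exactly the paper's route: the paper's proof is the one-line application of Theorem \ref{theresult} with ambient quiver $Q_v$ and $Q'^0=Q^0_v\setminus\{v^*\}=Q^0$, and you have simply supplied the routine verifications (regularity of $v^*$, the cycle condition, the identification $Q'\cong Q$, and $n=2$) that the paper leaves implicit. No gaps.
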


\begin{proof}
Put $Q'^0=Q^0_v\backslash \{v^*\}$, then the result follows by Theorem \ref{theresult}.
\end{proof}

\begin{definition}(\cite[Definitions 1.18 and 1.19]{ALPS}) A quiver transformation is called standard if it is one of these six types: in-splitting,
in-amalgamation, out-splitting, out-amalgamation, expansion, and contraction. Analogously,
a function which transforms a non-negative integer matrix $A$ to a non-negative integer matrix $B$ is called
standard if the corresponding quiver operation from $Q_A$ to $Q_B$ is standard.
If $Q$ and $Q'$ are quivers, a flow equivalence from $Q$ to $Q'$ is a sequence $Q=Q_0\rightarrow Q_1\rightarrow \cdots
\rightarrow Q_n = Q'$ of quivers and standard quiver transformations which starts at $Q$ and ends at $Q'$. We say
that $Q$ and $Q'$ are flow equivalent if there is a flow equivalence from $Q$ to $Q'$. Analogously, a flow
equivalence between matrices $A$ and $B$ is defined to be a flow equivalence between the quivers $Q_A$
and $Q_B$.
\end{definition}

By using Corollaries \ref{cor5}, \ref{cor7} and \ref{cor8} we have the following:

\begin{corollary} Let $Q$ and $Q'$ be quivers with finite number of vertices and no sinks such that $Q$ and $Q'$ are flow equivalent. Then $L_k(Q)$ and $L_k(Q')$ are $(1/2)\mathbb{Z}$-graded Morita equivalent.
\end{corollary}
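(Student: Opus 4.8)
The plan is to unwind the definition of flow equivalence and reduce to a single standard move, then compose. By definition there is a finite sequence $Q=Q_0\to Q_1\to\cdots\to Q_n=Q'$ in which each step is one of the six standard transformations (in-splitting, in-amalgamation, out-splitting, out-amalgamation, expansion, contraction). Since being graded Morita equivalent is symmetric and transitive once a grading group is fixed, it suffices to produce, for every step, a $(1/2)\mathbb{Z}$-graded Morita equivalence between $L_k(Q_i)$ and $L_k(Q_{i+1})$, and then compose the chain.

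First I would handle the individual moves by type. In-splitting and out-splitting are covered by Corollary \ref{cor5}, which yields $\mathbb{Z}$-graded Morita equivalences; in-amalgamation and out-amalgamation are the inverse moves and follow by symmetry of the relation together with the same corollary. Expansion is covered by Corollary \ref{cor8}, giving a $(1/2)\mathbb{Z}$-graded Morita equivalence, and contraction follows by symmetry. Should a source be introduced or deleted along the sequence, Corollary \ref{cor7} supplies the corresponding ($\mathbb{Z}$-graded) equivalence. Before applying each corollary I would verify that its hypotheses survive along the chain: the quivers remain finite, stay without sinks, and the vertices touched by an expansion, contraction or source move are neither sinks nor infinite emitters; all six moves are easily checked to preserve the no-sinks condition.

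The hard part will be that these corollaries do not all live in the same grading group: splitting, amalgamation and source moves output $\mathbb{Z}$-graded equivalences, whereas expansion and contraction output genuinely $(1/2)\mathbb{Z}$-graded ones. Indeed, as already observed, the relevant $(1/2)\mathbb{Z}$-grading on the larger Leavitt path algebra has support strictly larger than $\mathbb{Z}$, so it cannot be pushed back down to $\mathbb{Z}$ via Proposition \ref{l}; the common group must be $(1/2)\mathbb{Z}$. To place every $\mathbb{Z}$-graded step there, I would use that both algebras in such a step have support inside $\mathbb{Z}\subseteq(1/2)\mathbb{Z}$: over such an algebra $R$ the $(1/2)\mathbb{Z}$-action cannot mix the two cosets of $\mathbb{Z}$, so every object of $Gr^{(1/2)\mathbb{Z}}$-$R$ splits canonically into its integer-degree and half-integer-degree parts, identifying $Gr^{(1/2)\mathbb{Z}}$-$R$ with two copies of $Gr^{\mathbb{Z}}$-$R$ on which the half-suspension $\mathcal{T}_{1/2}$ acts by interchange. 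Applying a given $\mathbb{Z}$-graded equivalence simultaneously on both copies then produces a functor commuting with $\mathcal{T}_1$ (because the original functor is $\mathbb{Z}$-graded) and with $\mathcal{T}_{1/2}$ (because it acts identically on the two components), hence a $(1/2)\mathbb{Z}$-graded equivalence. This promotes every $\mathbb{Z}$-graded step to a $(1/2)\mathbb{Z}$-graded one.

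The remaining care, and the real subtlety, is to ensure that each algebra $L_k(Q_i)$ carries one and the same $(1/2)\mathbb{Z}$-grading in the two steps that meet at it, so that transitivity genuinely applies; here I would check that the fractional grading coming from an expansion or contraction and the promoted standard grading coming from an adjacent splitting or source move agree on the shared algebra, up to the rescaling built into Theorem \ref{theresult}. Once this compatibility is in place, every step of the flow equivalence yields a $(1/2)\mathbb{Z}$-graded Morita equivalence between consecutive Leavitt path algebras, and transitivity of $(1/2)\mathbb{Z}$-graded Morita equivalence gives that $L_k(Q)$ and $L_k(Q')$ are $(1/2)\mathbb{Z}$-graded Morita equivalent, as claimed.
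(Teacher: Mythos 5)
Your proposal takes the same route as the paper, whose entire proof is the single sentence ``By using Corollaries \ref{cor5}, \ref{cor7} and \ref{cor8} we have the following'': decompose the flow equivalence into standard moves, handle splittings and amalgamations by Corollary \ref{cor5}, expansions and contractions by Corollary \ref{cor8}, source moves by Corollary \ref{cor7}, and compose. What you add, and what the paper silently skips, is the bridge between grading groups: your observation that for an algebra $R$ with support in $\mathbb{Z}$ the category $Gr^{(1/2)\mathbb{Z}}$-$R$ decomposes into two copies of $Gr^{\mathbb{Z}}$-$R$ interchanged (up to a twist by $\mathcal{T}_1$) by $\mathcal{T}_{1/2}$, so that a $\mathbb{Z}$-graded equivalence promotes diagonally to a $(1/2)\mathbb{Z}$-graded one, is correct and is precisely the converse of Proposition \ref{l} that the composition requires. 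The one point you flag but do not carry out is real: the two equivalences meeting at an intermediate algebra $L_k(Q_i)$ must refer to the same $(1/2)\mathbb{Z}$-grading on it, whereas Theorem \ref{theresult} equips the larger quiver of an expansion with a nonstandard fractional grading while Corollaries \ref{cor5} and \ref{cor7} use the standard one, so strict transitivity needs an extra identification (or a reordering of the moves). That issue is equally unaddressed in the paper, so your attempt is, if anything, more careful than the published argument; to close it completely you would need to verify the compatibility you promise to check.
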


\subsection{Delays}

The definitions of this section are borrowed from \cite[section 4]{BP} (see also \cite[Definitions 3.2 and 3.5]{ALPS}. \\

\begin{definition}
Let $Q=(Q^0, Q^1, r, s)$ be a quiver. A map $d_s : Q^0\cup Q^1\rightarrow \mathbb{N}\cup \{\infty\}$ such that:

 $(i)$ if $w\in Q^0$ is not a sink then $d_s(w) = sup\{d_s(e) | s(e) = w\}$, and

 $(ii)$ if $d_s(x) = \infty$ for some $x$ then either $x$ is a sink or $x$ emits infinitely many edges,

is called a Drinen source-vector. Note that only vertices are allowed to have an infinite
$d_s$-value; moreover if $d_s(v) = \infty$ and $v$ is not a sink, then there are edges with source $v$
and arbitrarily large $d_s$-value. From this data we construct a new quiver as follows: Let

$$d_s(Q)^0 = \{v^i | v \in Q^0; 0\leq i\leq d_s(v)\},  and$$
$$d_s(Q)^1 = Q^1\cup \{f(v)^i | 1\leq i\leq d_s(v)\},$$

and for $e\in Q^1$ define $r_{d_s(Q)}(e) = r(e)^0$ and $s_{d_s(Q)}(e) = s(e)^{d_s(e)}$. For $f(v)^i$ define $s_{d_s(Q)}(f(v)^i) =
v^{i-1}$ and $r_{d_s(Q)}(f(v)^i) = v^i$. The resulting quiver $d_s(Q)$ is called the out-delayed
quiver of $Q$ for the Drinen source-vector $d_s$.

 In the out-delayed quiver the original vertices correspond to those vertices with superscript
0. Inductively, the edge $e\in Q^1$ is delayed from leaving $s(e)^0$ and arriving at $r(e)^0$ by a path
of length $d_s(e)$. The Drinen source vector $d_s$ is strictly proper if, whenever $v$ has infinite
valency, there is no $v_i$ with infinite valency unless $i = d_s(v) < \infty$. A Drinen source vector
$d_s$ which gives rise to an out-delayed quiver $d_s(Q)$ which may be constructed using
a finite sequence of strictly proper Drinen source vectors is said to be proper.
\end{definition}

\begin{definition}
Let $Q = (Q^0,Q^1, r, s)$ be a quiver. A map $d_r:Q^0\cup
Q^1\rightarrow \mathbb{N}\cup \{\infty\}$ satisfying:

$(i)$ if $w$ is not a source then $d_r(w) = sup \{d_r(e) | r(e) = w\}$, and

$(ii)$ if $d_r(x) = \infty$ then $x$ is either a source or receives infinitely many edges,

is called a Drinen range-vector. We construct a new quiver $d_r(Q)$ called the in-delayed
quiver of $Q$ for the Drinen range-vector $d_r$ as follows:

$$d_r(Q)^0 =\{v_i | v\in Q^0; 0\leq i\leq d_r(v)\}, and$$
$$d_r(Q)^1 = Q^1\cup \{f(v)_i | 1 \leq i\leq d_r(v)\};$$

and for $e\in Q^1$ we define $r_{d_r(Q)}(e) = r(e)_{d_r(e)}$ and $s_{d_r(Q)}(e) = s(e)_0$. For $f(v)_i$ we define
$s_{d_r(Q)}(f(v)_i) = v_i$ and $r_{d_r(Q)}(f(v)_i) = v_{i-1}$.
\end{definition}

\begin{corollary} Let $Q$ be a quiver with finite number of vertices, $d_s : Q^0\cup Q^1\rightarrow \mathbb{N}\cup \{\infty\}$ be a Drinen
source-vector and $d_r : Q^0\cup Q^1\rightarrow \mathbb{N}\cup \{\infty\}$ be a Drinen
range-vector.\\
i) There exists a positive integer $n$ such that $L_k(d_s(Q))$ and
$L_k(Q)$ are $(1/n)\mathbb{Z}$-graded Morita equivalent if and only
if $d_s$ is proper.\\
ii) There exists a positive integer $n$ such that $L_k(d_r(Q))$ and
$L_k(Q)$ are $(1/n)\mathbb{Z}$-graded Morita equivalent.
\end{corollary}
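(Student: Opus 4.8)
The plan is to obtain both parts directly from Theorem \ref{theresult}, taking the original (level-$0$) vertices as the distinguished subset $Q'^0$: for part (ii) set $Q=d_r(Q)$ and $Q'^0=\{v_0 : v\in Q^0\}$, and for part (i) set $Q=d_s(Q)$ and $Q'^0=\{v^0 : v\in Q^0\}$. The first step is to identify the quiver $Q'$ produced by the theorem with the original quiver. In $d_s(Q)$ a path that begins and ends at level $0$ with all intermediate vertices at positive level is forced to be a delay string $v^0\to v^1\to\cdots\to v^{d_s(e)}$ closed off by a single original edge $e$, and such paths biject with the edges $e\in Q^1$ (with source $s(e)$ and range $r(e)$); the same holds for $d_r(Q)$, where the original edge $e$ comes first and is followed by the descending delay string $r(e)_{d_r(e)}\to\cdots\to r(e)_0$. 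Hence $Q'\cong Q$ in both cases, and the maximal path length is $n=1+\max_{e\in Q^1}d_s(e)$ (resp.\ $1+\max_{e\in Q^1}d_r(e)$). The cycle hypothesis of Theorem \ref{theresult} is automatic: the delay edges strictly raise (resp.\ lower) the level index, so no cycle can consist of delay edges alone, and every cycle therefore traverses an original edge and meets $Q'^0$.

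The remaining hypothesis to check is $Q^0_{\mathrm{sing}}\subseteq Q'^0$, and this is where the two parts part ways. In the in-delayed quiver every non-original vertex $v_i$ with $i>0$ emits exactly one edge, the delay edge $f(v)_i\colon v_i\to v_{i-1}$, so it is regular; consequently the singular vertices of $d_r(Q)$ are precisely $\{v_0 : v\in Q^0_{\mathrm{sing}}\}\subseteq Q'^0$. Granting that $d_r$ is finite-valued (so that $d_r(Q)$ is a finite quiver), Theorem \ref{theresult} applies and yields part (ii) with $n=1+\max_{e}d_r(e)$; no properness hypothesis is needed, precisely because the in-delay construction never places a singular vertex off level $0$.

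For the out-delayed quiver the non-original vertices can be singular: $v^i$ with $i>0$ emits the delay edge together with every original edge $e$ satisfying $s(e)=v$ and $d_s(e)=i$, so it becomes an infinite emitter as soon as infinitely many edges leave $v$ with delay exactly $i$. This is exactly the configuration that properness forbids off level $0$. I would prove the ``if'' direction by showing that proper $d_s$ guarantees both $|d_s(Q)^0|<\infty$ and $Q^0_{\mathrm{sing}}(d_s(Q))\subseteq Q'^0$, so that Theorem \ref{theresult} again applies. When $Q$ has no infinite emitters this is immediate and every finite-valued $d_s$ is proper; in the presence of an infinite emitter I would instead write $d_s(Q)$ as a finite chain of strictly proper delays and realize each step as a composite of out-splittings and expansions, invoking Corollaries \ref{cor5} and \ref{cor8} and composing the resulting graded equivalences to produce the $(1/n)\mathbb{Z}$-graded equivalence.

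The main obstacle is the converse of part (i): deducing properness from the mere existence of a $(1/n)\mathbb{Z}$-graded Morita equivalence. A failure of properness manifests either as infinitely many vertices of $d_s(Q)$ or as an infinite emitter sitting at a positive level, and the difficulty is that neither feature is visibly destroyed by an ungraded Morita equivalence (for instance an infinite but acyclic tail can be Morita-trivial). I therefore expect the proof to rest on a genuinely graded invariant that is preserved under $(1/n)\mathbb{Z}$-graded Morita equivalence and computable from the quiver, namely the graded Grothendieck group $K_0^{\mathrm{gr}}$ together with its shift action (equivalently, Hazrat's talented monoid). The plan is to show that a non-proper out-delay changes this invariant away from the value forced by the finite quiver $Q$, thereby obstructing the equivalence; making this detection precise is the step I anticipate to be the hardest.
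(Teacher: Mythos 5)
Your reduction of part (ii) and of the ``if'' half of part (i) to Theorem \ref{theresult}, taking $Q'^0$ to be the set of level-zero vertices, is exactly the paper's argument (the paper simply writes ``put $Q'^0=Q^0$''), and your identification of the paths counted by the theorem with the delay strings, hence of the quiver $Q'$ produced by the theorem with the original $Q$, is the right bookkeeping. For a proper but not strictly proper $d_s$ the paper's intent is just to iterate the strictly proper case along the finite chain appearing in the definition of properness; that is closer to your first suggestion than to the detour through out-splittings, expansions and Corollaries \ref{cor5} and \ref{cor8}, though either route would do.

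The genuine gap is the converse of part (i), which you explicitly leave unproved and propose to attack via $K_0^{\mathrm{gr}}$ with its shift action. That machinery is not needed, and the worry that drives you to it --- that a failure of properness ``is not visibly destroyed by an ungraded Morita equivalence'' --- points you away from the actual argument. The paper's proof is elementary: if $d_s$ is not proper, then some delay column contains two distinct vertices $v^i$, $v^j$ with $0\leq i\leq j\leq d_s(v)$, each emitting infinitely many edges, and the two-sided ideal of $L_k(d_s(Q))$ generated by one of these vertices has no counterpart in $L_k(Q)$ (the ideal lattice of a Leavitt path algebra is governed by hereditary saturated subsets together with breaking vertices at infinite emitters, and the extra infinite emitter created off level zero enlarges this lattice). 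Since any Morita equivalence, graded or not, induces an isomorphism of the lattices of two-sided ideals, this already rules out a $(1/n)\mathbb{Z}$-graded Morita equivalence. So the invariant you need is the plain ideal lattice, not a graded refinement, and the step you anticipated to be the hardest is the most elementary one once you reach for a Morita invariant rather than a graded Morita invariant.
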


\begin{proof} i) Put $Q'^0=Q^0$, then it is easy to see that if $d_s$ is strictly proper, then the hypothesis of Theorem \ref{theresult} are satisfied. So the result follows by Theorem \ref{theresult}. If $d_s$ is not proper, then there are at least two vertices $f(v)^i, f(v)^j$ with $0\leq i\leq j\leq d_s(v)$ emitting infinitely many edges. In this case there is an ideal generated by $f(v)^i$ in
$L_k(d_s(Q))$ which was not present in $L_k(Q)$. Then $L_k(d_s(Q))$ is not Morita equivalent (and so is not graded Morita equivalent) to $L_k(Q)$.\\
ii) Put $Q'^0=Q^0$, then the result follows by Theorem \ref{theresult}.
\end{proof}

\begin{definition}(\cite[Definition 9.1]{AT})
Given a quiver $Q$, let $M_nQ$ be the quiver formed from $Q$ by taking
each $v\in Q^0$ and attaching a "head" of length $n-1$ of the form
$$\begin{matrix}
\xymatrix{{\bullet}_{v_{n-1}} \ar[r]^{e_{n-1}^v} & {\bullet}_{v_{n-2}} \ar[r]^{e_{n-2}^v}  & {\bullet}_{v_{n-3}} \ar @{.>}[r]&{\bullet}_{v_2} \ar [r]^{e_2^v} &{\bullet}_{v_1} \ar [r]^{e_1^v}  & {\bullet}_{v}}
\end{matrix}$$
to $Q$.
\end{definition}

\begin{corollary} Let $Q''$ be a quiver with finite number of vertices, then $L_k(Q'')$ and $L_k(M_nQ'')$ are $\mathbb{Z}$-graded Morita equivalent.
\end{corollary}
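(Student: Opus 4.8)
The plan is to invoke Theorem \ref{theresult} directly, taking $Q=M_nQ''$ and letting $Q'^0=Q''^0$ be the set of original vertices of $M_nQ''$, that is, those vertices carrying no subscript. First I would record the two structural features of the attached heads. Each head vertex $v_i$ with $1\leq i\leq n-1$ emits exactly the single edge $e_i^v$, so it is a regular vertex; and the only edge added at an original vertex $v$ is the incoming edge $e_1^v$, which changes neither whether $v$ is a sink nor whether it is an infinite emitter. Hence the singular vertices of $M_nQ''$ are exactly the singular vertices of $Q''$, and all of them lie in $Q''^0=Q'^0$; this is the first hypothesis of Theorem \ref{theresult}.

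For the second hypothesis I would argue that no head vertex can lie on a cycle: every head edge $e_i^v$ is directed toward $v$ and the top vertex $v_{n-1}$ is a source, so once a path enters a head it can never return. Therefore every cycle $\mu$ of $M_nQ''$ is supported on $Q''^0$, whence $\mu^0\subseteq Q'^0$ and in particular $Q'^0\cap\mu^0\neq\emptyset$.

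The decisive step is to identify the output quiver of Theorem \ref{theresult}. Since no head edge has an original vertex as its source, every edge leaving a vertex of $Q'^0$ already terminates in $Q'^0$; consequently the only paths $\mu$ with $s(\mu),r(\mu)\in Q'^0$ and all intermediate vertices outside $Q'^0$ are the original edges of $Q''$, each of length one. Thus the output quiver coincides with $Q''$, and the maximal length $\max\{l(\mu)\mid s(\mu),r(\mu)\in Q'^0,\ \ldots\}$ occurring in Theorem \ref{theresult} equals $1$. The theorem then gives that $L_k(Q'')$ and $L_k(M_nQ'')$ are $(1/1)\mathbb{Z}=\mathbb{Z}$-graded Morita equivalent.

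I expect the only point requiring care to be the claim in the decisive step that original vertices never emit into the heads, so that collapsing the heads produces no path of length exceeding one. This is precisely what pins the grading group to the full group $\mathbb{Z}$ rather than to a proper refinement $(1/n)\mathbb{Z}$, and it mirrors the fact that attaching a head is nothing but an iterated adjunction of regular sources; accordingly the statement could alternatively be deduced by applying the source-elimination Corollary \ref{cor7} successively to $v_{n-1},v_{n-2},\ldots,v_1$ for each $v\in Q''^0$.
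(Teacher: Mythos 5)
Your proposal is correct and takes exactly the same route as the paper: the paper's proof is the one-line ``set $Q=M_nQ''$ and $Q'^0=Q''^0$, then apply Theorem \ref{theresult}'', and you simply supply the routine verifications (heads contain no singular vertices and no cycles, and every admissible path has length one, forcing $n=1$) that the paper leaves implicit. Your closing remark that the result also follows by iterated source elimination via Corollary \ref{cor7} is a valid alternative, consistent with the paper's own observation that attaching heads is an inverse of source elimination.
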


\begin{proof}
Setting $Q=M_nQ''$ and $Q'^0=Q''^0$. Then the result follows by Theorem \ref{theresult}.
\end{proof}

\begin{definition}(\cite[Definition 2.3.10]{LM})
Let $Q$ be a quiver and $n\geq 2$. The n'th higher edge quiver of $Q$ is a quiver $Q^{[n]}$ with the vertex set equal to the collection of all paths of length $n-1$ in $Q$, and the edge set containing exactly one edge from $e_1e_2\cdots e_{n-1}$ to $f_1f_2\cdots f_{n-1}$ whenever $e_2e_3\cdots e_{n-1}=f_1f_2\cdots f_{n-2}$ (or $r(e_1)=s(f_1)$ if $n=2$), and none otherwise.
\end{definition}

\begin{example}
 Let $Q=Q^{[1]}:$ $$\hskip.5cm \xymatrix{
&&{\bullet} \ar@{<-}[dl]_{\alpha}\ar[dr]^{\beta}\\
&{\bullet}\ar[rr]_{\gamma}
&&{\bullet} \ar@(dr,ur)_{\delta}}
\hskip.5cm$$ then $Q^{[2]}:$ $$\xymatrix{{\bullet}_{\alpha}\ar[r]^{\alpha\beta}&{\bullet}
_{\beta}\ar[r]^{\beta\delta}&{\bullet}_{\delta}\ar@(ur,ul)_{\delta\delta}\ar@{<-}[r]^{\gamma\delta}&{\bullet}_{\gamma}}$$
\end{example}

The next corollary shows that the Corollary 1.4 of \cite{PS} is a corollary of the Theorem \ref{theresult}.

\begin{corollary}\label{cor6} Let $Q$ be a finite quiver with no
sinks. Then for all integers $n\geq 2$, $L_k(Q)$ and $L_k(Q^{[n]})$
are $\mathbb{Z}$-graded Morita equivalent.
\end{corollary}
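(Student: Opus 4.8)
The plan is to reduce everything to the single case $n=2$, read off that case directly from the incidence-matrix machinery already developed (Corollary \ref{cor3}), and then bootstrap to arbitrary $n$ by an iteration based on the identity $Q^{[n]}=(Q^{[n-1]})^{[2]}$. Throughout I will use that $\mathbb{Z}$-graded Morita equivalence is an equivalence relation, so chains of equivalences may be composed.

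First, for the base case $n=2$, I would exhibit $Q$ and $Q^{[2]}$ as the two quivers produced by an explicit factorization of incidence matrices. Let $L$ be the $Q^0\times Q^1$ matrix with $L_{v,e}=1$ when $s(e)=v$ and $0$ otherwise, and let $R$ be the $Q^1\times Q^0$ matrix with $R_{e,w}=1$ when $r(e)=w$ and $0$ otherwise. A direct computation gives $(LR)_{v,w}=\#\{e\in Q^1: s(e)=v,\ r(e)=w\}$, the incidence matrix of $Q$, while $(RL)_{e,f}=[\,r(e)=s(f)\,]$, which by the defining property of the second higher edge quiver (exactly one edge from $e$ to $f$ when $r(e)=s(f)$, none otherwise) is the incidence matrix of $Q^{[2]}$. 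Hence $Q=Q^{LR}$ and $Q^{[2]}=Q^{RL}$ in the notation of Corollary \ref{cor3}, and that corollary yields that $L_k(Q)$ and $L_k(Q^{[2]})$ are $\mathbb{Z}$-graded Morita equivalent. (Equivalently, this is the elementary strong shift equivalence $Q\sim_{ES}Q^{[2]}$ witnessed by the subdivision quiver obtained by splitting each $e\in Q^1$ into $s(e)\to m_e\to r(e)$, so one could instead invoke Corollary \ref{cor12} directly.)

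Next, to pass from $n=2$ to general $n$, the decisive combinatorial identity is $(Q^{[n-1]})^{[2]}=Q^{[n]}$. I would verify it by matching vertices and edges: the vertices of $(Q^{[n-1]})^{[2]}$ are the edges of $Q^{[n-1]}$, which correspond to the length-$(n-1)$ paths $a_1\cdots a_{n-1}$ of $Q$, i.e.\ to the vertices of $Q^{[n]}$; and an edge of $(Q^{[n-1]})^{[2]}$ from $a_1\cdots a_{n-1}$ to $b_1\cdots b_{n-1}$ exists precisely when the range of the first edge of $Q^{[n-1]}$ (the length-$(n-2)$ path $a_2\cdots a_{n-1}$) equals the source of the second ($b_1\cdots b_{n-2}$), which is exactly the overlap condition defining the edges of $Q^{[n]}$. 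I would also record that finiteness and the no-sinks hypothesis propagate: $Q^{[m]}$ has only finitely many length-$(m-1)$ paths, and a vertex $p$ of $Q^{[m]}$ emits an edge as soon as $r(p)$ emits an edge in $Q$, which holds because $Q$ has no sinks.

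With these two ingredients in place the result follows by a short induction on $n$. Since each intermediate quiver $Q^{[k]}$ with $2\le k\le n-1$ is again finite with no sinks, the base case applies to it and, using $(Q^{[k]})^{[2]}=Q^{[k+1]}$, gives
$$L_k(Q)\approx_{gr}L_k(Q^{[2]})\approx_{gr}L_k(Q^{[3]})\approx_{gr}\cdots\approx_{gr}L_k(Q^{[n]}),$$
and transitivity of $\mathbb{Z}$-graded Morita equivalence finishes the proof. I expect the main obstacle to be bookkeeping rather than conceptual: the two verifications carry all the content, namely checking the incidence-matrix factorization $LR=B_Q$, $RL=B_{Q^{[2]}}$ in the base case and, above all, establishing the identity $(Q^{[n-1]})^{[2]}=Q^{[n]}$ together with the propagation of the finite/no-sinks hypotheses, so that Corollary \ref{cor3}(i) (equivalently Corollary \ref{cor12}) is legitimately applicable at every link of the chain.
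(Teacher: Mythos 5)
Your proposal is correct and follows essentially the same route as the paper: the base case $n=2$ is obtained from the elementary strong shift equivalence $Q\sim_{ES}Q^{[2]}$ via the subdivision quiver (equivalently, the factorization $B_Q=LR$, $B_{Q^{[2]}}=RL$ fed into Corollary \ref{cor3}, which is itself a restatement of Corollary \ref{cor12}), and the general case follows by induction using $(Q^{[n-1]})^{[2]}=Q^{[n]}$. Your write-up merely spells out the incidence-matrix computation and the propagation of the finite/no-sinks hypotheses, which the paper leaves as ``easy to see.''
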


\begin{proof} Let $Q'$ be a quiver with the vertex set $Q'^0=Q^0\cup Q^1$, and the
edge set containing exactly one edge from $v\in Q^0$ to $e\in Q^1$
whenever $s(e)=v$, exactly one edge from $e\in Q^1$ to $v\in Q^0$
whenever $r(e)=v$, and none otherwise. Then it is easy to see that
$Q\sim_{ES}Q^{[2]}$ via $Q'$. $Q^{[2]}$ and $Q'$ are finite quivers
with no sinks, and so by Corollary \ref{cor12}, $L_k(Q)$
and $L_k(Q^{[2]})$ are $\mathbb{Z}$-graded Morita equivalent. An
easy computation shows that $(Q^{[n-1]})^2=Q^{[n]}$, then by
induction on $n$ we can see that $L_k(Q)$ and $L_k(Q^{[n]})$ are
$\mathbb{Z}$-graded Morita equivalent.
\end{proof}

By Theorem \ref{theresult}, Proposition \ref{pro} and results of
section 2.3 we have the following corollary:

\begin{corollary}\label{cor1} Let $k$ an arbitrary field, $Q$ and $Q'$ be finite quivers without sinks such that one of the following conditions holds:

a) $Q'\sim_{ES}Q$;

b) $Q'\sim_{S}Q$;

c) $Q'=Q_r(\mathcal{P})$ for some partition $\mathcal{P}$ of $Q^1$;

d) $Q'=Q_s(\mathcal{P})$ for some partition $\mathcal{P}$ of $Q^1$;

e) $Q'=Q_{\backslash v}$ for some $v\in Q^0$ which is a source and
not infinite emitter or sink;

f) $Q'=M_nQ$ for some positive integer $n$;

g) $Q'=Q^{[n]}$ for some positive integer $n$;\\

Then the following statements hold:\\

(1) The algebras $kQ/J^2$ and $kQ'/J^2$ are singularly equivalent;

(2) The Leavitt path algebras $L_k(Q)$ and $L_k(Q')$ are derived
equivalent;

(3) The opposite Leavitt path algebras $L_k(Q)^{op}$ and
$L_k(Q')^{op}$ are derived equivalent;

(4) There is a triangle equivalence
$K_{ac}(Inj\textsc{-}kQ/J^2)\xrightarrow[]{\sim}K_{ac}(Inj\textsc{-}kQ'/J^2)$;

(5) There is a triangle equivalence
$K_{ac}(Proj\textsc{-}kQ^{op}/J^2)\xrightarrow[]{\sim}K_{ac}(Proj\textsc{-}kQ'^{op}/J^2)$;

(6) The categories $qgr(kQ)$ and $qgr(kQ')$ are equivalent;

(7) The categories $QGr(kQ)$ and $QGr(kQ')$ are equivalent.
\end{corollary}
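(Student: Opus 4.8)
The plan is to collapse all seven hypotheses to a single intermediate statement---that the Leavitt path algebras $L_k(Q)$ and $L_k(Q')$ are ($\mathbb{Z}$-)graded Morita equivalent---and then read off conclusions (1)--(7) from Proposition \ref{pro} and the results of Section 2.3. First I would treat hypotheses (a)--(g) one at a time, noting that each has already been shown to yield a graded Morita equivalence of the associated Leavitt path algebras. Concretely: cases (a) and (b) follow from Corollary \ref{cor12}; cases (c) and (d) from Corollary \ref{cor5}; case (e) from Corollary \ref{cor7}; case (f) from the corollary on the head-attachment quiver $M_nQ$; and case (g) from Corollary \ref{cor6}. In every instance the output is $\mathbb{Z}$-graded Morita equivalence, the passage from $(1/n)\mathbb{Z}$ to $\mathbb{Z}$ having already been effected inside those corollaries via Proposition \ref{l}.

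Next, with $L_k(Q)$ and $L_k(Q')$ known to be graded Morita equivalent, I would invoke Proposition \ref{pro}. Since $Q$ and $Q'$ are finite quivers without sinks, statement (2) of that proposition is precisely the graded Morita equivalence just obtained, and it is equivalent there to statements (1), (3), (4), (5) and (6). Transcribing these equivalent statements gives exactly conclusions (1)--(5) of the corollary: the singular equivalence of $kQ/J^2$ and $kQ'/J^2$, the derived equivalence of $L_k(Q)$ and $L_k(Q')$, the derived equivalence of the opposite algebras $L_k(Q)^{op}$ and $L_k(Q')^{op}$, and the two triangle equivalences $K_{ac}(Inj\textsc{-}kQ/J^2)\xrightarrow{\sim}K_{ac}(Inj\textsc{-}kQ'/J^2)$ and $K_{ac}(Proj\textsc{-}kQ^{op}/J^2)\xrightarrow{\sim}K_{ac}(Proj\textsc{-}kQ'^{op}/J^2)$.

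Finally, for conclusions (6) and (7) I would appeal to the noncommutative algebraic geometry results recorded in Section 2.3. For finite quivers without sinks, graded Morita equivalence of $L_k(Q)$ and $L_k(Q')$ implies, through Proposition \ref{pro} together with \cite[Theorem 1.6]{PS1}, that $qgr(kQ)$ and $qgr(kQ')$ are equivalent; and combining the cited results of Smith and Hazrat it likewise implies that $QGr(kQ)$ and $QGr(kQ')$ are equivalent. These give (6) and (7) and close out the argument.

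Since the whole proof is an assembly of implications already in hand, there is no genuinely hard computational step. The only point demanding care is verifying that each of (a)--(g) lands in the $\mathbb{Z}$-graded case rather than merely the $(1/n)\mathbb{Z}$-graded one, so that Proposition \ref{pro}---stated for the natural $\mathbb{Z}$-grading on a Leavitt path algebra---applies directly; this is exactly what the earlier corollaries, via Proposition \ref{l}, were arranged to guarantee.
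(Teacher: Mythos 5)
Your proposal is correct and follows essentially the same route as the paper, which derives the corollary from Theorem \ref{theresult} (via the intermediate Corollaries \ref{cor12}, \ref{cor5}, \ref{cor7}, \ref{cor6} and the $M_nQ$ corollary), then applies Proposition \ref{pro} for conclusions (1)--(5) and the Section 2.3 results of Smith and Hazrat for (6) and (7). Your added remark about checking that each case yields a $\mathbb{Z}$-graded (not merely $(1/n)\mathbb{Z}$-graded) equivalence, handled by Proposition \ref{l}, is exactly the point the paper's earlier corollaries are arranged to settle.
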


\section{Proof of the main theorem}
In this section we give the proof of the Theorem \ref{theresult}.
We need the following lemmas in the proof of the Theorem \ref{theresult}. In the next lemmas we assume that the hypothesis of the Theorem \ref{theresult} are satisfied. The following lemmas are finite version of \cite[Lemma 3.3, Lemma 3.6]{CG}, we give the proof for the convenience of reader.\\

\begin{lemma}\label{firstlemma1} For each $v\in Q^0\backslash Q'^0$ we have $v\geq Q'^0$.
\end{lemma}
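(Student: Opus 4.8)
The plan is to build a path out of $v$ that is forced to land in $Q'^0$, combining the hypothesis that every cycle meets $Q'^0$ with the finiteness of $Q^0$. Throughout I read $v\geq Q'^0$ as asserting that $v\geq w$ for some $w\in Q'^0$, i.e. that there is a path from $v$ to a vertex of $Q'^0$.

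First I would record that every vertex $w\in Q^0\backslash Q'^0$ is regular: since $Q^0_{\text{sing}}\subseteq Q'^0$, a vertex lying outside $Q'^0$ can be neither a sink nor an infinite emitter, so $0<|s^{-1}(w)|<\infty$; in particular $w$ emits at least one edge. Starting from $v$ I then construct a sequence of edges $\alpha_1,\alpha_2,\dots$ with $s(\alpha_1)=v$ and $r(\alpha_i)=s(\alpha_{i+1})$ as follows. Set $w_0:=v$, and having reached $w_i:=r(\alpha_i)$, stop if $w_i\in Q'^0$; otherwise $w_i$ is regular, so I may pick an edge $\alpha_{i+1}$ with $s(\alpha_{i+1})=w_i$ and continue. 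If this process ever stops, say at $w_i\in Q'^0$, then $\alpha_1\cdots\alpha_i$ is a path from $v$ to $Q'^0$ and $v\geq Q'^0$, as desired.

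Next I would argue that the process must stop. Suppose instead it runs forever, yielding an infinite walk with every $w_i\in Q^0\backslash Q'^0$. Since $Q^0$ is finite, the vertices $w_0,w_1,w_2,\dots$ cannot be pairwise distinct; let $l$ be least such that $w_l=w_j$ for some $j<l$. By minimality $w_j,w_{j+1},\dots,w_{l-1}$ are pairwise distinct, while $\alpha_{j+1}\cdots\alpha_l$ is a closed path based at $w_j$ (its range is $r(\alpha_l)=w_l=w_j$) whose source vertices $s(\alpha_{j+1})=w_j,\dots,s(\alpha_l)=w_{l-1}$ are pairwise distinct. Thus $\mu:=\alpha_{j+1}\cdots\alpha_l$ is a cycle in the sense of the definition, with $\mu^0=\{w_j,\dots,w_{l-1}\}\subseteq Q^0\backslash Q'^0$.

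This contradicts the standing hypothesis that every cycle $\mu$ satisfies $Q'^0\cap\mu^0\neq\emptyset$. Hence the walk meets $Q'^0$ after finitely many steps, which proves $v\geq Q'^0$. The only delicate point is extracting an \emph{honest} cycle (with distinct intermediate vertices) from a mere repetition in the walk; selecting the first repeated vertex handles this cleanly, and the finiteness of $Q^0$ is exactly what guarantees that such a repetition occurs after finitely many steps, so that the contradiction is reached.
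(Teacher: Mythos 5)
Your proof is correct and follows essentially the same route as the paper's: both use the hypothesis $Q^0_{\text{sing}}\subseteq Q'^0$ to keep extending a path through $Q^0\backslash Q'^0$ and invoke the finiteness of $Q^0$ to produce a cycle avoiding $Q'^0$, contradicting the standing assumption that every cycle meets $Q'^0$. Your version is in fact slightly more careful than the paper's, which simply asserts that ``continuing in this way gives a cycle,'' whereas you explicitly extract an honest cycle (with pairwise distinct source vertices) by taking the first repeated vertex of the walk.
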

\begin{proof} On the contrary, suppose that there exists $v\in Q^0\backslash Q'^0$ such that there is no path in $\text{Path}(Q)$ with source $v$ and range in $Q'^0$. Since $v$ is not a sink, there exists an edge $\mu_1$ with $s(\mu_1)=v$. By assumption $r(\mu_1)\in Q^0\backslash Q'^0$ and $r(\mu_1)\ngeq Q'^0$. Since $Q^0$ is finite, continuing in this way gives a cycle $\mu=\mu_1\mu_2\cdots \mu_n \in\text{Path}(Q)$ with $s(\mu_i)\in Q^0\backslash Q'^0$ for each $i$ which is a contradiction, and so $v\geq Q'^0$.
\end{proof}

Let $v\in Q^0$. We define $\displaystyle B_v:=\{\mu\in \text{Path}(Q)\backslash Q^0 \mid s(\mu)=v, r(\mu)\in Q'^0 \text{ and }  r(\mu_i)\in Q^0\backslash Q'^0  \text{ for each } 1\leq i<|\mu| \}$.

\begin{lemma}\label{firstlemma} (i) If $B_v=\emptyset$ then $v\in Q'^0$.

(ii) If $B_v$ is finite and $B_v\neq \emptyset$, then $0<|s^{-1}_Q(v)|<\infty$.

(iii) Assume that $B_v$ is finite and nonempty. If $\{e, w\mid e\in Q^1, w\in Q^0\}$ is a $Q$-family, then
$v=\sum_{\alpha\in B_v}\alpha\alpha^{\ast}$.

\end{lemma}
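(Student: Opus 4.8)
The plan is to handle the three parts in sequence, using Lemma~\ref{firstlemma1} and the Cuntz--Krieger relations as the only inputs. Part~(i) I would prove contrapositively: assume $v\in Q^0\backslash Q'^0$. By Lemma~\ref{firstlemma1} we have $v\geq Q'^0$, so some path from $v$ reaches $Q'^0$; choose such a path $\mu$ of minimal length. As $v\notin Q'^0$, $\mu$ is not a trivial path, and minimality forces $r(\mu_i)\in Q^0\backslash Q'^0$ for every $1\leq i<l(\mu)$, since an intermediate vertex in $Q'^0$ would give a shorter path from $v$ to $Q'^0$. Hence $\mu\in B_v$ and $B_v\neq\emptyset$, which is exactly the contrapositive of the claim.

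For part~(ii), the hypothesis $B_v\neq\emptyset$ supplies a path out of $v$, whose first edge witnesses $|s^{-1}_Q(v)|>0$, so $v$ is not a sink. To see that $v$ is not an infinite emitter I would again argue contrapositively, producing infinitely many distinct members of $B_v$ from the assumption $|s^{-1}_Q(v)|=\infty$. Partition $s^{-1}_Q(v)$ according to whether $r(e)\in Q'^0$: if infinitely many edges $e$ satisfy $r(e)\in Q'^0$ then each such $e$ is a length-one element of $B_v$; otherwise infinitely many edges $e$ have $r(e)\in Q^0\backslash Q'^0$, and for each of these part~(i) provides some $\nu\in B_{r(e)}$, whence $e\nu\in B_v$, these being pairwise distinct. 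Either way $B_v$ is infinite, proving the contrapositive.

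Part~(iii), which I expect to be the crux, I would obtain by iterating CK-2. By part~(ii) the vertex $v$ is regular, so in the given $Q$-family $v=\sum_{e\in s^{-1}_Q(v)}ee^{*}$. I leave untouched every term $ee^{*}$ with $r(e)\in Q'^0$ (for which $e\in B_v$), and for a term with $w:=r(e)\in Q^0\backslash Q'^0$ I substitute $w=\sum_{f\in s^{-1}_Q(w)}ff^{*}$ via $ee^{*}=e\,r(e)\,e^{*}=\sum_{f}(ef)(ef)^{*}$. Iterating this replacement on any term $\mu\mu^{*}$ whose range is not yet in $Q'^0$, the terms that are eventually never expanded are exactly the paths $\mu$ from $v$ reaching $Q'^0$ for the first time, i.e. the elements of $B_v$; so once the procedure halts it yields $v=\sum_{\alpha\in B_v}\alpha\alpha^{*}$.

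The main obstacle is making this iteration legitimate and finite. Every vertex $w=r(\mu)\notin Q'^0$ at which I apply CK-2 must be regular, and this follows from part~(ii) once $B_w$ is shown to be finite and nonempty: the prefix $\mu$ gives an injection $\nu\mapsto\mu\nu$ of $B_w$ into $B_v$ (so $B_w$ is finite as $B_v$ is), while $B_w\neq\emptyset$ by part~(i). Termination then follows because any as-yet-unexpanded prefix $\mu$ extends, again by part~(i), to some $\mu\nu\in B_v$, so every path occurring in the expansion has length at most $\max_{\alpha\in B_v}l(\alpha)$, which is finite since $B_v$ is finite; as each CK-2 step branches only finitely (the vertices being regular), the expansion tree is finite and the process halts. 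Conceptually, the standing assumption that every cycle of $Q$ meets $Q'^0$ is what forbids an infinite expansion branch, namely an infinite path out of $v$ with all ranges in $Q^0\backslash Q'^0$, which by finiteness of $Q^0$ would contain a cycle disjoint from $Q'^0$.
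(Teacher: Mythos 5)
Your proposal is correct and follows essentially the same route as the paper: parts (i) and (ii) are proved there exactly as you do, by taking the shortest path to $Q'^0$ supplied by Lemma \ref{firstlemma1} and by extending each edge in $s^{-1}_Q(v)$ to an element of $B_v$. Your iterated CK-2 expansion in (iii), with regularity of the intermediate vertices coming from (i) and (ii) and termination controlled by $\max\{l(\alpha) : \alpha\in B_v\}$, is just the unrolled form of the paper's induction on $l(B_v)=\max\{|\alpha| : \alpha\in B_v\}$.
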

\begin{proof}
(i) Let $v\in Q^0\backslash Q'^0$, then by Lemma \ref{firstlemma1} there exists a path $\alpha\in \text{Path}(Q)$ such that $s(\alpha)=v$ and $r(\alpha)\in Q'^0$. Let $i$ be the minimum indices that $r(\alpha_i)\in Q'^0$, then $\alpha_1\alpha_2\cdots\alpha_i\in B_v$ and the result follows.

(ii) Let $\alpha\in s^{-1}_Q(v)$. If $r(\alpha)\in Q'^0$, then $\alpha\in B_v$, and if $r(\alpha)\in Q^0\backslash Q'^0$, then by part (i), there exists $\beta\in B_{r(\alpha)}$ and so $\alpha\beta\in B_v$ is an extension of $\alpha$. Thus $|s^{-1}_Q(v)|<|B_v|<\infty$. Since $B_v\neq \emptyset$, $v$ cannot be a sink and so $0<|s^{-1}_Q(v)|$ and the result follows.

(iii) Put $l(B_v):=max\{|\alpha| : \alpha\in B_v\}$. Since $B_v$ is finite and nonempty, this number is well-defined. We use induction on $l(B_v)$. Assume that $l(B_v)=1$. We show that $B_v=s^{-1}_Q(v)$. Since $l(B_v)=1$, each path in $B_v$ is a single edge and so $B_v\subseteq s^{-1}_Q(v)$. Now assume that $e\in s^{-1}_Q(v)$ and $r(e)\in Q^0\backslash Q'^0$, then by (i), there exists a path $\alpha\in B_{r(e)}$. Thus $e\alpha\in B_v$ is a path of length grater than 2, which is contradiction. Hence $r(e)\in Q'^0$ and so $e\in B_v$. Then $B_v=s^{-1}_Q(v)$ and the CK-2 relation implies that $v=\sum_{e\in s^{-1}_Q(v)}ee^{\ast}=\sum_{\alpha\in B_v}\alpha\alpha^{\ast}$. Now assume that $l(B_v)=n$ and for each $w\in Q^0$ with $1\leq l(B_w)<n$, $w=\sum_{\alpha\in B_w}\alpha\alpha^{\ast}$. $v$ is regular and so
\begin{align*}
v=\sum_{e\in s^{-1}_Q(v)}ee^{\ast}=\sum_{e\in s^{-1}_Q(v)\cap r^{-1}(Q'^0)}ee^{\ast}+\sum_{e\in s^{-1}_Q(v)\cap r^{-1}(Q^0\backslash Q'^0)}ee^{\ast}\\
=\sum_{\alpha\in B_v\cap Q^1}\alpha\alpha^{\ast}+\sum_{e\in s^{-1}_Q(v)\cap r^{-1}(Q^0\backslash Q'^0)}ee^{\ast}.
\end{align*}
Let $e\in s^{-1}_Q(v)\cap r^{-1}(Q^0\backslash Q'^0)$, then (i) implies that $B_{r(e)}\neq \emptyset$. For each $\alpha\in B_{r(e)}$, $e\alpha\in B_v$ and so $B_{r(e)}$ is finite and $l(B_{r(e)})\leq n-1$. Also for each $\gamma\in B_{v}$ with $|\gamma|\geq 2$, there exists some $f\in s^{-1}_Q(v)\cap r^{-1}(Q^0\backslash Q'^0)$ and $\alpha\in B_{r(e)}$ such that $f\alpha=\gamma$. Also $r(\gamma_1)\in s^{-1}_Q(v)\cap r^{-1}(Q^0\backslash Q'^0)$ and $\gamma_2\cdots \gamma_{|\gamma|}\in B_{r(\gamma_1)}$. Then for such $e$ by applying the inductive hypothesis to $r(e)$, we have
\begin{align*}
v=\sum_{\alpha\in B_v\cap Q^1}\alpha\alpha^{\ast}+\sum_{e\in s^{-1}_Q(v)\cap r^{-1}(Q^0\backslash Q'^0)}(\sum_{\beta\in B_{r(e)}}e\beta(e\beta)^{\ast})\\
=\sum_{\alpha\in B_v\cap Q^1}\alpha\alpha^{\ast}+\sum_{\alpha\in B_v, |\alpha|\geq2}\alpha\alpha^{\ast}=\sum_{\alpha\in B_v}\alpha\alpha^{\ast}
\end{align*}
and the result follows.
\end{proof}

\begin{lemma}\label{secondlemma1} Let $\{e, v\mid e\in Q^1, v\in Q^0\}$ be a $Q$-family in $L_k(Q)$, $q_v=v$ for $v\in Q'^0$ and $T_{e_\alpha}=\alpha$ for $e_\alpha\in Q'^1$. Then $\{T_{e_\alpha}, q_v\mid e_\alpha\in Q'^1, v\in Q'^0\}$ is a $Q'$-family in $L_k(Q')$.
\end{lemma}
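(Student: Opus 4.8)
Lemma \ref{secondlemma1} asks us to verify that a specific collection of elements in $L_k(Q)$ — namely the vertex idempotents $q_v = v$ for $v \in Q'^0$, together with the elements $T_{e_\alpha} = \alpha$ (where $\alpha$ is the path in $Q$ corresponding to the edge $e_\alpha$ of $Q'$) — satisfies the defining relations of a $Q'$-family. Let me think about what needs checking.

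A $Q'$-family must satisfy: (1) $s(e_\alpha)\, e_\alpha = e_\alpha = e_\alpha\, r(e_\alpha)$ and the analogous ghost relation; (2) the CK-1 relations $e_\alpha^* e_\alpha = r(e_\alpha)$ and $e_\alpha^* e_\beta = 0$ for $\alpha \neq \beta$; (3) the CK-2 relation at each regular vertex of $Q'$. So the proof breaks into checking these, using the fact that the original $\{e, v\}$ is a $Q$-family plus the structural hypotheses of Theorem \ref{theresult}.

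The first relations (orthogonal idempotents among the $q_v$, and the source/range compatibility) are immediate since each $q_v = v$ is already an idempotent in the $Q$-family and the paths $\alpha$ inherit their source/range from $Q$, which by construction of $Q'$ match $s_{Q'}(e_\alpha), r_{Q'}(e_\alpha)$. The CK-1 check is where I would be careful: for a path $\alpha = \alpha_1\cdots\alpha_m$, one computes $T_{e_\alpha}^* T_{e_\alpha} = \alpha^* \alpha = \alpha_m^* \cdots \alpha_1^* \alpha_1 \cdots \alpha_m$, which telescopes via the $Q$-family CK-1 relations down to $r(\alpha) = r(e_\alpha) = q_{r(e_\alpha)}$. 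For $\alpha \neq \beta$ distinct edges of $Q'$, I need $\alpha^*\beta = 0$; here the key point is that two distinct edges $e_\alpha, e_\beta$ of $Q'$ correspond to paths $\alpha, \beta$ in $B_{s}$ for a common source, and by the definition of $B_v$ (the interior vertices lie outside $Q'^0$, so the paths are "non-crossing" until they reach $Q'^0$), distinct such paths must already differ in their first edge, forcing $\alpha^*\beta = 0$ by the $Q$-family relations.

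The CK-2 relation is the main obstacle and is precisely where Lemma \ref{firstlemma}(iii) does the work. A vertex $v \in Q'^0$ is regular in $Q'$ exactly when $s^{-1}_{Q'}(v)$ is finite and nonempty, and the edges of $Q'$ emitted by $v$ are in bijection with $B_v$ via $e_\alpha \leftrightarrow \alpha$. So the required relation $q_v = \sum_{e_\alpha \in s^{-1}_{Q'}(v)} T_{e_\alpha} T_{e_\alpha}^*$ becomes exactly $v = \sum_{\alpha \in B_v} \alpha \alpha^*$, which is the content of Lemma \ref{firstlemma}(iii). Thus the plan is: dispatch relations (1) and the first half of (2) by direct telescoping, handle the orthogonality $\alpha^*\beta = 0$ by a first-edge-differs argument using the structure of the sets $B_v$, and then invoke Lemma \ref{firstlemma}(iii) to obtain the CK-2 relation, after identifying $s^{-1}_{Q'}(v)$ with $B_v$. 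I would also note the boundary cases — when $v \in Q'^0$ is a sink or infinite emitter in $Q'$ there is no CK-2 condition to verify, consistent with Lemma \ref{firstlemma}(ii), which guarantees regularity of $v$ in $Q$ precisely when $B_v$ is finite and nonempty.
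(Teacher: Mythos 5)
Your overall plan matches the paper's proof: the idempotent and source/range relations are immediate, $T_{e_\alpha}^*T_{e_\alpha}$ telescopes to $r(\alpha)$, and the CK-2 relation at a regular vertex $v$ of $Q'$ is exactly Lemma \ref{firstlemma}(iii) after identifying $s^{-1}_{Q'}(v)$ with $B_v$. However, your justification of the orthogonality relation $T_{e_\alpha}^*T_{e_\beta}=0$ contains a genuine error: it is \emph{not} true that two distinct paths in $B_v$ must differ in their first edge. For instance, if $v\in Q'^0$ emits a single edge $e$ into a vertex $w\notin Q'^0$, and $w$ emits two edges $f_1,f_2$ into $Q'^0$, then $ef_1$ and $ef_2$ are distinct elements of $B_v$ sharing their first edge. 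So the ``first-edge-differs'' argument fails as stated.

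The correct (and still elementary) fact, which is what the paper proves, is that for distinct $\alpha,\beta\in\bigcup_{v\in Q'^0}B_v$ \emph{neither path is an initial subpath of the other}: if $\alpha=\beta\gamma$ with $\gamma$ nontrivial, then $r(\alpha_{|\beta|})=r(\beta)\in Q'^0$ is an interior range-vertex of $\alpha$, contradicting the defining condition $r(\alpha_i)\in Q^0\setminus Q'^0$ for $i<|\alpha|$. Given that neither is a prefix of the other, $\alpha^*\beta$ telescopes via the CK-1 relations of the $Q$-family until the first index where the two paths diverge (which need not be the first index), and vanishes there; the paper cites \cite[Lemma 3.1]{T} for exactly this. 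With that one step repaired, your proof coincides with the paper's.
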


\begin{proof} The set $\{q_v| v\in Q'^0\}$ is a set of nonzero mutually orthogonal idempotents because the set $\{v| v\in Q'^0\}$ is. For each $e_\alpha\in Q'^1$, $\alpha\in \text{Path}(Q)$ and so $s(T_{e_\alpha})T_{e_\alpha}=T_{e_\alpha}r(T_{e_\alpha})$ and $r(T_{e_\alpha})T_{e_\alpha}^{\ast}=T_{e_\alpha}^{\ast}s(T_{e_\alpha})=T_{e_\alpha}^{\ast}$. Now assume that $e_\alpha, e_\beta\in Q'^1$ and $e_\alpha\neq e_\beta$. Then $\alpha, \beta\in \bigcup_{v\in Q'^0} B_v$. If $\alpha=\beta\gamma$, for some $\gamma\in \text{Path}(Q)\backslash Q^0$, then $|\beta|<|\alpha|$ and $r(\alpha_{|\beta|})=r(\beta)\in Q'^0$, which is contradicting with $\alpha\in B_v$ and so neither one of $\alpha, \beta$ is an extension of the other. Thus by \cite[Lemma 3.1]{T}, $T_{e_\alpha}^{\ast}T_{e_\beta}=\alpha^{\ast}\beta=0$. Also for each $e_\alpha\in Q'^1$, $T_{e_\alpha}^{\ast}T_{e_\alpha}=\alpha^{\ast}\alpha=r(\alpha)=r(T_{e_\alpha})$. Now let a regular vertex $v\in Q'^0$. Then $B_v$ is finite and nonempty since, $0<|s^{-1}_{Q'}(v)|<\infty$. Thus by part (iii) of Lemma \ref{firstlemma}, \begin{equation*}
q_v=v=\sum_{\alpha\in B_v}\alpha\alpha^{\ast}=\sum_{e_\alpha\in s^{-1}_{Q'}(v)}T_{e_\alpha}T_{e_\alpha}^{\ast}.
\end{equation*}
and the result follows.
\end{proof}

A homogeneous element $e$ of a graded ring $A$ is called full homogeneous idempotent if $e^2=e$ and $AeA = A$. The following lemma proved in \cite{H1} by R. Hazrat.

\begin{lemma}\label{secondlemma}(\cite[Example 2.]{H1}) Let $A$ be a graded ring and $e$ be a full homogeneous idempotent of $A$. Then $eAe$ is graded
Morita equivalent to $A$.
\end{lemma}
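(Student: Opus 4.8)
The plan is to realize the equivalence through the graded Morita context attached to the corner ring $eAe$, checking at each step that everything stays inside the $G$-graded world. First I would record the elementary grading bookkeeping. Writing $A=\bigoplus_{g\in G}A_g$, the hypothesis that $e$ is homogeneous and idempotent already forces $e$ to have trivial degree: if $e\in A_g$ then $e=e^2\in A_{2g}$, and since the homogeneous components form an internal direct sum, the (necessarily nonzero) element $e$ can lie in both $A_g$ and $A_{2g}$ only when $g$ is the identity of $G$. Consequently $eAe=\bigoplus_{g\in G}eA_ge$ is again a $G$-graded ring, and $Ae=\bigoplus_{g}A_ge$ and $eA=\bigoplus_{g}eA_g$ are, respectively, a graded $(A,eAe)$-bimodule and a graded $(eAe,A)$-bimodule, with $(Ae)_g=A_ge$ and $(eA)_g=eA_g$.

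Next I would write down the two structure maps of the Morita context as graded bimodule homomorphisms,
\[
\mu: Ae\otimes_{eAe}eA\longrightarrow A,\quad ae\otimes eb\mapsto aeb,\qquad
\tau: eA\otimes_A Ae\longrightarrow eAe,\quad ea\otimes a'e\mapsto eaa'e,
\]
both of which are degree preserving because $e$ has trivial degree and multiplication respects the grading. The map $\tau$ is an isomorphism for \emph{any} idempotent, with inverse $eae\mapsto e\otimes eae$; the verification is the routine identity $ea\otimes a'e=e\otimes aa'e$ in the balanced tensor product together with $e^2=e$. The point at which fullness enters is the surjectivity of $\mu$: its image is precisely $Ae\cdot eA=AeA$, which equals $A$ exactly because $e$ is full. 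Hence both structure maps are surjective, which is exactly the condition making the context strict.

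Finally I would invoke the graded Morita theory of \cite{H2}: a graded Morita context whose two structure maps are surjective (and therefore bijective) produces mutually inverse graded equivalences $-\otimes_A Ae:Gr\textsc{-}A\to Gr\textsc{-}eAe$ and $-\otimes_{eAe}eA:Gr\textsc{-}eAe\to Gr\textsc{-}A$. That these are \emph{graded} functors is automatic, since tensoring a graded module by a graded bimodule commutes with every suspension $\mathcal{T}_g$; and the natural isomorphisms $(M\otimes_A Ae)\otimes_{eAe}eA\cong_{gr} M$ and $(N\otimes_{eAe}eA)\otimes_A Ae\cong_{gr} N$ are induced by $\mu$ and $\tau$, hence degree preserving. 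This yields $Gr\textsc{-}A\approx_{gr}Gr\textsc{-}eAe$, i.e.\ $A$ and $eAe$ are graded Morita equivalent. Equivalently, one may phrase the whole argument by checking directly that $eA$ is a graded progenerator of $Gr\textsc{-}A$: it is a graded direct summand of $A_A$, hence graded finitely generated projective, and it is a graded generator exactly when $AeA=A$; then $\mathrm{End}_A(eA)\cong eAe$ and the graded analogue of the Morita theorem applies.

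The hard part here is not the algebra of the Morita context, which is entirely classical, but the bookkeeping that every object and morphism in sight is graded: that $e$ has trivial degree, that the context bimodules carry the induced $G$-grading, that $\mu$ and $\tau$ are degree preserving, and that the resulting functors commute with the suspensions $\mathcal{T}_g$. Once this graded bookkeeping is in place, the ungraded Morita argument runs verbatim inside the graded category and the conclusion is immediate.
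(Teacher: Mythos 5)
Your proof is correct, and it supplies exactly the standard argument that the paper itself omits: the paper gives no proof of this lemma, citing it directly to Hazrat \cite[Example 2.]{H1}, where the same graded Morita context (equivalently, $eA$ as a graded progenerator with $\mathrm{End}_A(eA)\cong eAe$) is the underlying mechanism. Your bookkeeping that a nonzero homogeneous idempotent must sit in the identity component and that fullness is what makes the map $\mu$ surjective are the right points to check, so the proposal matches the intended argument.
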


Now we are ready to proof our main result. In the proof of the Theorem \ref{theresult} we use the general idea of \cite[Theorem 4.2]{BP}. Several authors in \cite[Theorem 3.1]{CG}, \cite[Theorem 3.3]{ALPS} and \cite[Proposition 13.]{H1} use this idea, but details of proofs case by case are different.

\begin{proof}[Proof of Theorem \ref{theresult}] Let $\mathcal{A}=\{\mu|\mu\in\text{Path}(Q)\backslash Q^0, s(\mu), r(\mu)\in Q'^0\hskip.1cm
and\hskip.1cm  r(\mu_i)\in Q^0\backslash Q'^0\\
for\hskip.1cm 1\leq i<l(\mu)\}$ and $n=max\{l(\mu)|\mu\in \mathcal{A}\}$. First we show that the Leavitt path algebras $L_k(Q')$ and $L_k(Q)$ have a $(1/n)\mathbb{Z}$-graded structure. Let $G=(1/n)\mathbb{Z}$ and set $w(v)=0$ for each $v\in Q'^0$, $w(\alpha)=1$ and $w(\alpha^\ast)=-1$ for each $\alpha\in Q'^1$. Then we obtain a natural $(1/n)\mathbb{Z}$-grading on $L_k(Q')$ with the support $\mathbb{Z}$. Set $w(v)=0$ for each $v\in Q^0$. For any $\mu=\mu_1\cdots\mu_n\in \mathcal{A}$, set $w(\mu_i)=1/n$. Now for any $\mu=\mu_1\cdots\mu_{n-1}\in \mathcal{A}
$, we define inductively weight of $\mu_i$ for each $i$. If in the first step we set $w(\mu_{1})=\cdots=w(\mu_{j})=1/n$ (after reordering the $\mu_i$'s if needed) for some $j\leq n-1$, then we set $w(\mu_r)=1/n $ for each $j+1\leq r\leq n-2$ and $w(\mu_{n-1})=2/n$. Otherwise we set $w(\mu_{i})=1/n$ for each $1\leq i\leq n-2$ and $w(\mu_{n-1})=2/n$. We obtain inductively the weight of any edge of $Q$ which lies on some path $\mu=\mu_1\cdots\mu_{j}\in \mathcal{A}$, for some $1\leq j\leq n$. For any edge $\alpha$ of $Q$ which dose not lie on the paths $\mu=\mu_1\cdots\mu_{j}\in \mathcal{A}$, for each $1\leq j\leq n$, we set $w(\alpha)=1$. Also for any $\alpha\in Q^1$, we set $w(\alpha^\ast)=-w(\alpha)$. Then we obtain a natural $(1/n)\mathbb{Z}$-grading on $L_k(Q)$. Note that by this grading for any $\mu\in \mathcal{A}$, $deg(\mu)=1$.

Let $\{e, v\mid e\in Q^1, v\in Q^0\}$ be a $Q$-family in $L_k(Q)$,  $q_v=v$ for $v\in Q'^0$ and $T_{e_\alpha}=\alpha$ for $e_\alpha\in Q'^1$. By Lemma \ref{secondlemma1}, $\{T_{e_\alpha}, q_v\mid e_\alpha\in Q'^1, v\in Q'^0\}$ is a $Q'$-family in $L_k(Q')$. Put $\{e_\alpha, v\}$ be the canonical $Q'$-family. The Universal Homomorphism Property of $L_k(Q')$ implies that, there exists a $k$-algebra homomorphism $$\pi:L_k(Q')\rightarrow L_k(\{T_{e_\alpha}, q_v\})$$ such that $\pi(e_\alpha)=T_{e_\alpha}$ and $\pi(v)=q_v$, for each $e_\alpha\in Q'^1$ and $v\in Q'^0$. For each $e_\alpha\in Q'^1$ we set $w(T_{e_\alpha})=1$, $w(T_{e_\alpha}^\ast)=-1$ and for each $v\in Q'^0$, we set $w(q_v)=0$. Then $L_k(\{T_{e_\alpha}, q_v\})$ is a $\mathbb{Z}$-graded (also $(1/n)\mathbb{Z}$-graded) algebra. Note that $L_k(Q')$ is a $(1/n)\mathbb{Z}$-graded algebra with the support $\mathbb{Z}$. It is not difficult to see that $\pi$ is a $\mathbb{Z}$-graded ring homomorphism with $\pi(v)=q_v\neq0$ for each $v\in Q'^0$. The Graded Uniqueness Theorem \cite[Theorem 4.8]{T}, implies that $\pi$ is a $\mathbb{Z}$-graded isomorphism. Suppose that $Q'^0=\{v_1, v_2\cdots,v_m\}$ and put $e=\sum_{i=1}^{m}v_i$, then $e$ is an idempotent in $L_k(\{T_{e_\alpha}, q_v\})$. We claim that $eL_k(Q)e=L_k(\{T_{e_\alpha}, q_v\})$. For each $v\in Q'^0$, obviously $q_v=v\in eL_k(Q)e$ and for each $e_\alpha\in Q'^1$, $T_{e_\alpha}=eT_{e_\alpha}e\in eL_k(Q)e$. Then $eL_k(Q)e\supseteq L_k(\{T_{e_\alpha}, q_v\})$. Now let $\lambda\mu^{\ast}\in L_k(Q)$. Suppose that $e\lambda\mu^{\ast}e\neq 0$, then $e\lambda\mu^{\ast}e=\lambda\mu^{\ast}$, $s(\lambda), s(\mu)\in Q'^0$ and $r(\lambda)=r(\mu)$. We consider the following two cases to show that $e\lambda\mu^{\ast}e\in L_k(\{T_{e_\alpha}, q_v\})$.

$\circ$ Case 1: $r(\lambda)\in Q'^0$. We first show that $\lambda$ is a product of paths in $\bigcup_{i=1}^mB_{v_i}$. Assume that $\lambda=\lambda_1\lambda_2\cdots\lambda_t$. If $t=1$, by definition of $B_v$ the assertion holds. Now suppose that the assertion holds for paths of length less than $t$. If there exists $1\leq i<t$ such that $r(\lambda_i)\in Q'^0$, then $\lambda=(\lambda_1\cdots \lambda_i)(\lambda_{i+1}\cdots\lambda_t)$ and the result follows by inductive hypothesis. If there is no such $i$, then $\lambda\in B_{s(\lambda)}$ by definition. Thus $\lambda\in L_k(\{T_{e_\alpha}, q_v\})$. Since $r(\mu)=r(\lambda)\in Q'^0$, the same argument shows that $\mu\in L_k(\{T_{e_\alpha}, q_v\})$ and so $e\lambda\mu^{\ast}e=\lambda\mu^{\ast}\in L_k(\{T_{e_\alpha}, q_v\})$.

$\circ$ Case 2: $r(\lambda)\in Q^0\backslash Q'^0$. Since
$r(\lambda)\in Q^0\backslash Q'^0$ and $Q^0_{\text{sing}}\subseteq
Q'^0$ by definition of $B_{r(\lambda)}$,  $B_{r(\lambda)}$ is a
finite set and by part (i) of Lemma \ref{firstlemma},
$B_{r(\lambda)}\neq \emptyset$. Let $j=max\{i| s(\lambda_i)\in
Q'^0\}$ and $\alpha=\lambda_1\cdots \lambda_{j-1}$,
$\beta=\lambda_j\cdots \lambda_{|\lambda|}$. So we have
$\lambda=\alpha\beta$ and $s(\alpha), r(\alpha), s(\beta)\in Q'^0$
and for each $1\leq i \leq  |\beta|$, $r(\beta_i)\in Q^0\backslash
Q'^0$. Similarly we can decompose $\mu$ as a product
$\gamma\delta$ for some paths $\gamma, \delta\in \text{Path}(Q)$
with the same properties as $\alpha$ and $\beta$, respectively. Case
1 shows that $\alpha, \gamma\in L_k(\{T_{e_\alpha}, q_v\})$. Since
$B_{r(\lambda)}$ is finite and nonempty, and
$r(\lambda)=r(\beta)=r(\delta)$, by part (iii) of Lemma
\ref{firstlemma} we have $\beta\delta^{\ast}=\beta
r(\lambda)\delta^{\ast}=\beta (\sum_{\eta\in
B_{r(\lambda)}}\eta\eta^{\ast})\delta^{\ast} =\sum_{\eta\in
B_{r(\lambda)}}\beta\eta(\delta\eta)^{\ast}=\sum_{\eta\in
B_{r(\lambda)}}T_{e_{\beta\eta}}T_{e_{\delta\eta}}^{\ast}$. In the
last equation we use the fact that $\beta\eta, \delta\eta\in
\bigcup_{i=1}^mB_{v_i}$. Thus $\beta\delta^{\ast}\in
L_k(\{T_{e_\alpha}, q_v\})$ and hence $\lambda\mu^{\ast}$ is a
finite sum of elements of $L_k(\{T_{e_\alpha}, q_v\})$. Then
$\lambda\mu^{\ast}\in L_k(\{T_{e_\alpha}, q_v\})$. Now let
$a\in eL_k(Q)e$. By \cite[Corollary 3.2]{T}, each $x\in L_k(Q)$ has
the form $x=\sum_{i=1}^{t}\lambda_i\alpha_i\beta_i^{\ast}$, such
that for each $1\leq i\leq t$, $\alpha_i, \beta_i\in
\text{Path}(Q)$, $r(\alpha_i)=r(\beta_i)$ and $\lambda_i\in k$. Thus
$a=\sum_{i=1}^{t}\lambda_i(e\alpha_i\beta_i^{\ast}e)$. Cases 1
and 2 show that each $e\alpha_i\beta_i^{\ast}e\in
L_k(\{T_{e_\alpha}, q_v\})$ and so $a\in L_k(\{T_{e_\alpha},
q_v\})$. Then $eL_k(Q)e=L_k(\{T_{e_\alpha},
q_v\})$ and our claim follows.

Now we show that $Q^0$ is the hereditary saturated closure of
$B=\{v_1, v_2\cdots, v_m\}$. We have $Q'^0\subseteq \overline{B}$.
Let $v\in Q^0\backslash Q'^0$, then by part $(i)$ of Lemma
\ref{firstlemma}, $B_v\neq\emptyset$. The same argument as in the
Case 2 shows that $B_v$ is finite. Then by Lemma \ref{firstlemma},
$0<|s^{-1}_Q(v)|<\infty$. Let $l(B_v):=max\{|\alpha| : \alpha\in
B_v\}$ as in the proof of Lemma \ref{firstlemma}. By induction on
$l(B_v)$ we show that $v\in \overline{B}$. If $l(B_v)=1$ then the
same argument as in the proof of Lemma \ref{firstlemma} implies that
for each $\alpha\in s^{-1}(v)$, $r(\alpha)\in Q'^0$. Since
$0<|s^{-1}_Q(v)|<\infty$, $v\in \overline{B}$. Now suppose that
for each  $w\in Q^0\backslash Q'^0$ if $1\leq l(B_w)\leq n$ then
$w\in \overline{B}$, and assume that $l(B_v)=n+1>1$. For each $\alpha\in
s^{-1}(v)$, we have either $r(\alpha)\in Q^0$ or $r(\alpha)\in Q^0\backslash
Q'^0$. Suppose that $r(\alpha)\in Q^0\backslash Q'^0$, then by Lemma
\ref{firstlemma}, $B_{r(\alpha)}\neq \emptyset$ and so $l(B_{r(\alpha)})\geq
1$. Thus $1\leq l(B_{r(\alpha)})\leq n$, since each path $\beta\in
B_{r(\alpha)}$ gives a path $\alpha\beta\in B_v$ with length $|\beta|+1$ and
hence $r(\alpha)\in \overline{B}$ by inductive hypothesis. Then for any
edge $\alpha\in s^{-1}(v)$, $r(\alpha)\in \overline{B}$, and so $v\in
\overline{B}$, since $v$ is regular. Thus $Q^0\backslash Q'^0
\subseteq \overline{B}$ and so $Q^0$ is the hereditary saturated
closure of $B=\{v_1, v_2\cdots,v_m\}$. By \cite[Lemma 2.1]{APS},
$\sum_{i=1}^{m}L_k(Q)v_iL_k(Q)=L_k(Q)$ and so $e=v_1+\cdots+v_m$
is a full homogeneous idempotent of $L_k(Q)$. Then by Lemma
\ref{secondlemma}, $L_k(Q)$ and $eL_k(Q)e$ are
$(1/n)\mathbb{Z}$-graded Morita equivalent. Thus $L_k(Q)$ and $L_k(Q')$
are $(1/n)\mathbb{Z}$-graded Morita equivalent and the result
follows.
\end{proof}


\section*{acknowledgements}
Special thanks are due to the referee who read this
paper carefully and made useful comments and suggestions that improved the presentation of the paper.
The author would like to thank Professor R. Hazrat for valuable discussions concerning Theorem \ref{theresult} and Professor G. Bergman for valuable discussions and helps concerning Proposition \ref{l}.


\begin{thebibliography}{10}

\bibitem{AALP} \textsc{G. Abrams, P.N. \'{A}nh, A. Louly, E. Pardo}, The classification question for Leavitt path algebras, \emph{J. Algebra} \textbf{320} (2008), 1983--2026.

\bibitem{AA2} \textsc{G. Abrams, G. Aranda Pino}, Purely infinite simple Leavitt path algebras, \emph{J. Pure Appl. Algebra} \textbf{207 (3)} (2006), 553--563.

\bibitem{AA1} \textsc{G. Abrams, G. Aranda Pino}, The Leavitt path algebra of a graph, \emph{J. Algebra} \textbf{293 (2)} (2005), 319--334.

\bibitem{AA3} \textsc{G. Abrams, G. Aranda Pino}, The Leavitt path algebras of arbitrary graphs, \emph{Houston J. Math.} \textbf{34 (2)} (2008), 423--442.

\bibitem{ALPS} \textsc{G. Abrams, A. Louly, E. Pardo, C. Smith}, Flow invariants in the classification of Leavitt path algebras, \emph{J. Algebra} \textbf{333} (2011), 202--231.

\bibitem{AT} \textsc{G. Abrams, M. Tomforde}, Isomorphism and Morita equivalence of graph algebras,
\emph{Trans. Amer. Math. Soc.} \textbf{363} (2011), 3733--3767.

\bibitem{AMP} \textsc{P. Ara, M.A. Moreno, E. Pardo}, Nonstable $K$-theory for graph algebras, \emph{Algebr. Represent. Theory} \textbf{10} (2007), 157--178.

\bibitem{AMM} \textsc{G. Aranda Pino, D. Mart\'{i}n Barquero, C. Mart\'{i}n Gonz\'{a}lez, M. Siles Molina}, Socle
theory for Leavitt path algebras of arbitrary graphs, \emph{Rev. Mat. Iberoamericana} \textbf{26 (2)} (2010), 611--638.

\bibitem{APS} \textsc{G. Aranda Pino, E. Pardo, M. Siles Molina}, Exchange Leavitt path algebras and stable rank, \emph{J. Algebra} \textbf{305(2)} (2006), 912--936.


\bibitem{A} \textsc{B. Ashton}, Morita equivalence of graph $C^*$-algebras, \emph{Honours Thesis}, University of Newcastle
(1996).

\bibitem{Au} \textsc{M. Auslander, I. Reiten, S. O. Smal{\o}}, \emph{Representation theory of Artin algebras}, Cambridge studies in
advanced mathematics \textbf{36}, Cambridge University Press, 1995.

\bibitem{B} \textsc{T. Bates}, Applications of the gauge-invariant uniqueness theorem for the Cuntz-Krieger algebras
of directed graphs, \emph{Bull. Austral. Math. Soc.} \textbf{66} (2002), 57--67.

\bibitem{BP} \textsc{T. Bates, D. Pask}, Flow equivalence of graph algebras, \emph{Ergodic Theory Dynam. Systems}
\textbf{24} (2004), 367--382.

\bibitem{BU} \textsc{R. O. Buchweitz}, Maximal Cohen-Macaulay Modules and Tate Cohomology over Gorenstein
Rings, Unpublished Manuscript, (1987).

\bibitem{CY} \textsc{X. W. Chen, D. Yang}, Homotopy categories, Leavitt path algebras and Gorenstein projective modules, \emph{Int.
Math. Res. Notices} \textbf{2015  (10)} (2015), 2597--2633.

\bibitem{CG} \textsc{T. Crisp, D. Gow}, Contractible subgraphs and Morita equivalence of graph $C^*$-algebras, \emph{Proc. Amer. Math.
Soc.} \textbf{134  (7)} (2006), 2003--2013.

\bibitem{D} \textsc{D. Drinen}, Flow equivalence and graph-groupoid isomorphism, \emph{Doctoral Thesis}, Arizona State
University (1999).

\bibitem{DT} \textsc{D. Drinen, M. Tomforde}, The $C^*$-algebras of arbitrary graphs, \emph{Rocky Mountain J. Math.} \textbf{35 (1)} (2005), 105--135.

\bibitem{BG}  \textsc{R. Gordon, E. L. Green}, Graded Artin algebras, \emph{J. Algebra} \textbf{76} (1982), 111--137.

\bibitem{H2} \textsc{R. Hazrat}, Graded rings and graded Grothendieck groups, Caimbridge University Press, in print (arXiv:1405.5071).

\bibitem{H1} \textsc{R. Hazrat}, The dynamics of Leavitt path algebras, \emph{J. Algebra} \textbf{384} (2013), 242--266.

\bibitem{K} \textsc{H. Krause}, The stable derived category of a noetherian scheme, \emph{Compositio Math.} \textbf{141}
(2005), 1128--1162.

\bibitem{Leavitt} \textsc{W. G. Leavitt}, Modules without invariant basis number, \emph{Proc. Amer. Math.
Soc.} \textbf{8} (1957), 322--328.

\bibitem{LM} \textsc{D. Lind, B. Marcus}, \emph{An Introduction to Symbolic Dynamics and Coding}, Camb.
Univ. Press, Cambridge, 1995.

\bibitem{O} \textsc{D. Orlov}, Triangulated categories of singularities and D-branes in Landau-Ginzburg models, \emph{Trudy Steklov Math. Institute} \textbf{204} (2004), 240--262.

\bibitem{S} \textsc{M. Siles Molina}, Algebras of quotients of path algebras,  \emph{J. Algebra} \textbf{319 (12)} (2008), 329--348.

\bibitem{PS1} \textsc{S. P. Smith}, Category equivalences involving graded modules over path algebras of quivers, \emph{Adv. Math.} \textbf{230} (2012) 1780--1810.

\bibitem{PS} \textsc{S. P. Smith}, Shift equivalence and a category equivalence involving graded modules over path algebras of quivers, \emph{arXiv:1108.4994}.

\bibitem{T} \textsc{M. Tomforde}, Uniqueness theorems and ideal structure
for Leavitt path algebras, \emph{J. Algebra} \textbf{318} (2007), 270--299.

\bibitem{W} \textsc{R. F. Williams}, Classification of subshifts of finite type, \emph{Ann. of Math.} \textbf{98} (1973),
120--153.

\end{thebibliography}
\end{document}